\newtheorem{theorem}{Theorem}[section]
\newtheorem{proposition}[theorem]{Proposition}
\newtheorem{corollary}[theorem]{Corollary}
\theoremstyle{definition}
\newtheorem{definition}[theorem]{Definition}
\newtheorem{claim}[theorem]{Claim}
\newtheorem{remark}[theorem]{Remark}
\newcommand{\mA}{\mathbb A}
\newcommand{\mC}{{\mathbb C}}
\newcommand{\mE}{{\mathbb E}}
\newcommand{\mF}{\mathbb F}
\newcommand{\mV}{\mathbb V}
\newcommand{\mX}{\mathbb X}
\newcommand{\ho}{\hookrightarrow}
\newcommand{\Gg}{\gamma}
\newcommand{\bl}{\lambda}
\newcommand{\bL}{\Lambda}
\newcommand{\D}{\Delta}
\newcommand{\kk}{\kappa}
\newcommand{\ti}{\tilde}
\newcommand{\wt}{\widetilde}
\newcommand{\sing}{\operatorname{sing}}
\newcommand{\ar}{ar}
\begin{document}

\title{On the  codimension of the singular locus}


\author{David Kazhdan}
\address{Einstein Institute of Mathematics,
Edmond J. Safra Campus, Givaat Ram 
The Hebrew University of Jerusalem,
Jerusalem, 91904, Israel}
\email{david.kazhdan@mail.huji.ac.il}

\author{Tamar Ziegler}
\address{Einstein Institute of Mathematics,
Edmond J. Safra Campus, Givaat Ram 
The Hebrew University of Jerusalem,
Jerusalem, 91904, Israel}
\email{tamarz@math.huji.ac.il}

\thanks{The second author is supported by ERC grant ErgComNum 682150. 
}

\begin{abstract} Let $k$ be a field and $V$ an $k$-vector space.
For a family $\bar P=\{ P_i\}_{1\leq i\leq c}, $ of polynomials on $V$, we denote by $\mX _{\bar P}\subset V$ the 
subscheme defined by the ideal generated by $ \bar P$. We show the existence of $\Gg (c,d)$ such that 
 varieties $\mX _{\bar P}$ are smooth outside of codimension $m$, if $\deg(P_i)\leq d$ and rank (strength)
$r_{nc}(\bar P)\geq \Gg (d,c) (1+m)^{\Gg (d,c)}$.
\end{abstract}

\maketitle

\section{Introduction} Let $k$ be a field and $\mV$ an $k$-vector space.
For a family $\bar P=\{ P_i\}_{1\leq i\leq c}, $ of polynomials on $V$ we denote by $\mX _{\bar P}\subset \mV$ the subscheme  defined by a the ideal 
$\{ P_i\}$ and by $\mX ^{\sing}_{\bar P}\subset \mX _{\bar P}$  the 
subscheme of singularities. We denote by $\kk (\bar P)$ the codimension of 
$\mX ^{\sing}_{\bar P}$ in $ \mX _{\bar P}$. If $c=1$ we write $\mX _P$ 
instead of $ \mX _{\bar P}$.

For a polynomial $P:\mV\to k$ we denote by $dP :\mV \to \mV^\vee$ the differential $dP (v)(h):= \partial P/\partial h(v),h\in \mV$. 
Then $\mX^{\sing}_P =\mX _P\cap dP ^{-1}(0)=\wt {dP}^{-1}(0)$ where $\wt {dP}(v):\mV \to \mA \oplus \mV ^\vee$ is given by  $\wt {dP}(v) = (P(v),dP (v))$.

For a polynomial $P:V\to k$  we denote by $\ti P :V^d\to k$ the multilinear symmetric form associated with $P$ defined by 
$\ti P(h_1, \ldots, h_d) : =  \Delta_{h_1} \ldots  \Delta_{h_d} P: V^d \to k$, where $\Delta_hP(x) = P(x+h)-P(x)$. 
The {\em rank} $r(P)$ is  the minimal number $r$ such that $P$ can be written in the form $P=\sum _{i=1}^rQ_iR_i$, where $Q_i,R_i$ are polynomials on $V$ of degrees $<d$.   We define the  {\em non-classical rank (nc-rank)} $r_{nc}(P)$ to be the rank of $\ti P$, and observe that if  char$(k) >d$ (or char$(k)=0$) then $r(P) \le r_{nc}(P)$,
since  $\ti P\circ \D =d! P$ where $\D :V\ho V^d$ is the diagonal imbedding.\\

The following statement  is proven in \cite{ah}.
\begin{claim}\label{serre} There exist a function $C(d,c,m)$ such that  $\kk (\bar P) \geq m$ for any family $\bar P=\{ P_i\}_{1\leq i\leq c}$  of polynomials $P_i$ on $V$, with  $\deg(P_i) \leq d$ for $1 \leq i\leq c$, such that $r(\bar P)\geq C(d,c,m)$ where $r(\bar P) $ is the algebraic rank (see section \ref{sec-def}).
\end{claim}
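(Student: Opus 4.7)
Plan. The strategy is a double induction: an outer induction on the degree $d$, and for each $d$ a reduction of the $c$-polynomial case to the single-polynomial case via parameterization of singularities by linear combinations.

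First I would parameterize the singular points. Under the Jacobian criterion---valid once $\bar P$ is a regular sequence, which follows from high algebraic rank since any nontrivial syzygy would produce a low-rank linear combination---a point $v \in \mX_{\bar P}$ is singular exactly when $dP_1(v), \ldots, dP_c(v) \in V^\vee$ are linearly dependent, equivalently when some $[\lambda] \in \mP^{c-1}$ satisfies $dQ_\lambda(v) = 0$, where $Q_\lambda := \sum_i \lambda_i P_i$. Forming the incidence
\[
W := \{(v, [\lambda]) \in V \times \mP^{c-1} : v \in \mX_{\bar P},\ dQ_\lambda(v) = 0\},
\]
one has $\pi_V(W) = \mX^{\sing}_{\bar P}$, while each fiber of $W \to \mP^{c-1}$ over $[\lambda]$ is contained in $\mX^{\sing}_{Q_\lambda}$. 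Provided the algebraic rank $r(\bar P)$ controls $r(Q_\lambda)$ for every nonzero $\lambda$ (as should be built into the definition in Section \ref{sec-def}, at worst up to a function of $c$), the hypothesis yields $r(Q_\lambda) \geq C(d,c,m)$ for every $\lambda \neq 0$. Hence if the single-polynomial case ensures $\operatorname{codim}_V \mX^{\sing}_{Q_\lambda} \geq m + 2c - 1$, then
\[
\dim \mX^{\sing}_{\bar P} \leq (c-1) + (\dim V - m - 2c + 1) = \dim V - m - c,
\]
which is exactly codimension $m$ inside the codimension-$c$ complete intersection $\mX_{\bar P}$.

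For the single-polynomial statement---if $\deg Q \leq d$ and $r(Q) \geq C_1(d,m)$ then $\operatorname{codim}_V \mX^{\sing}_Q \geq m$---I would induct on $d$. The case $d = 1$ is immediate, since a nonzero linear form cuts out a smooth hyperplane. For $d \geq 2$, I use the inclusion $\mX^{\sing}_Q \subseteq \mX_{\nabla Q}$ into the common zero locus of the partials $\partial_{x_i} Q$, which are polynomials of degree $d-1$. The crux is a \emph{rank-propagation} lemma: there is a function $\rho$ such that $r(Q) \geq \rho(r')$ implies the family $(\partial_{x_1} Q, \ldots, \partial_{x_n} Q)$ has algebraic rank at least $r'$. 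Granting this, the inductive hypothesis at degree $d-1$ applied to the partials yields the required codimension bound on $\mX_{\nabla Q}$, hence on $\mX^{\sing}_Q$.

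The principal obstacle is the rank-propagation lemma. The natural argument is by contrapositive: if some combination $\partial_\alpha Q = \sum_i \alpha_i \partial_{x_i} Q$ admitted a short decomposition into products of lower-degree polynomials, then antidifferentiation along $\alpha$---a one-variable integration, or equivalently a Taylor expansion identity---should build a short decomposition of $Q$ itself, contradicting the rank hypothesis. Each layer of the induction then inflates the required rank by $\rho$, so the resulting $C_1(d,m)$ and hence $C(d,c,m)$ will grow in $d$ in a tower-like fashion. Tracking these bounds carefully, together with the clean passage from collective rank of a family to the rank of its individual linear combinations, is the main technical labor.
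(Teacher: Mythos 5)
First, a point of orientation: the paper does not prove Claim \ref{serre} at all --- it quotes it from Ananyan--Hochster \cite{ah}; the paper's own work (Theorems \ref{1} and \ref{main}) is an explicit bound for $C(d,c,m)$, obtained by a finite-field argument (bias, analytic rank, equidistribution of fibers of $\wt{d\bar P}$) that looks nothing like your proposal. Your reduction from $c$ polynomials to one is essentially sound: by the paper's definition $r(\bar P)=\min_{\lambda\neq 0}r(Q_\lambda)$, so every nonzero combination inherits the rank hypothesis for free, and the incidence-variety dimension count over $\mP^{c-1}$ is correct (modulo the caveat that $\lambda$ ranges over $\bar k$-points, so one must control how rank behaves under field extension).

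The fatal gap is the rank-propagation lemma. As you state it, it is false. Take $Q=x_1^d+H(x_2,\dots,x_n)$ with $H$ of enormous rank; then $r(Q)\geq r(H)$ (restrict any decomposition of $Q$ to $x_1=0$), yet $\partial_{x_1}Q=d\,x_1^{d-1}$ has rank $1$, so the family of all partials has collective rank $1$ --- the collective rank being, by the paper's definition, the minimum over nonzero linear combinations. Consistently with this, the proposed antidifferentiation argument cannot be repaired: integrating a single product $R_iS_i$ along a line does not yield a short sum of products of lower-degree polynomials, and no such implication can hold since the conclusion fails. (A second, independent defect: the family $(\partial_{x_1}Q,\dots,\partial_{x_n}Q)$ has $n=\dim V$ members, so even a correct appeal to the inductive hypothesis would invoke $C(d-1,n,m)$ with $n$ the ambient dimension, which is vacuous --- ranks never exceed roughly $n$ anyway.) What is actually true, and is the entire technical heart of the problem, is the weaker existence statement: for high-rank $Q$ one can \emph{choose} $m$ directions $h_1,\dots,h_m$ so that the $(m{+}1)$-element family consisting of $Q$ and the corresponding derivative/difference polynomials has high collective rank. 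This is the paper's Proposition \ref{r}, proved by a nontrivial pigeonhole argument combined with Gowers-norm and partition-rank-versus-analytic-rank estimates (and in \cite{ah} by a commutative-algebra argument). Your proposal assumes this key lemma in a form that is false and offers no viable route to the correct form, so the induction on degree does not close.
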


The goal of this paper is to obtain an explicit upper bound on $C(d,c,m)$. We first consider the special case when $c=1$.

\begin{theorem}\label{1} Let $d>0$. There exists a function $\Gg (d)$ such that for any field $k$, and a  polynomial $P$ of degree $d$ and of nc-rank $\geq \Gg (d) (1+m)^{\Gg (d)}$, we have 
$\kk (P)\geq m$.
\end{theorem}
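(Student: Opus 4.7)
We argue the contrapositive: assume $\kk(P) \leq m-1$, so $\dim \mX^{\sing}_P \geq n - m$ where $n := \dim \mV$, and aim to derive an upper bound $r_{nc}(P) \leq \Gg(d)(1+m)^{\Gg(d)}$. In characteristic coprime to $d$, Euler's identity gives $\mX^{\sing}_P = \{v : dP(v) = 0\}$, and via $dP(v)(h) = \ti P(v^{d-1}, h)/(d-1)!$ the singular locus is identified with $\{v \in \mV : \ti P(v^{d-1}, \cdot) \equiv 0 \text{ in } \mV^\vee\}$. A separate argument using $\ti P$ directly is needed in small characteristic.

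The plan has two stages: (a) produce a linear subspace $L \subseteq \mX^{\sing}_P$ of codimension $k$ bounded by a polynomial in $m$ (with $d$-dependent exponent); (b) convert such an $L$ into a bound on $r_{nc}(P)$. Stage (b) is the cleaner one. Polarization (valid in characteristic $\nmid (d-1)!$) upgrades the pointwise condition $\ti P(v^{d-1}, h) = 0$ for $v \in L$, $h \in \mV$ to the multilinear identity $\ti P(v_1, \ldots, v_{d-1}, h) = 0$ for all $v_1, \ldots, v_{d-1} \in L$ and $h \in \mV$; that is, $\ti P$ vanishes on $L^{d-1} \times \mV$. Choose a splitting $\mV = L \oplus U$ with $\dim U = k$ and a basis $e_1, \ldots, e_k$ of $U$; expand $\ti P(v_1, \ldots, v_d)$ by $v_i = \pi_L(v_i) + \pi_U(v_i)$ in each slot. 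The terms with at most one $\pi_U$-projection vanish by the above identity. Each remaining term has at least two $\pi_U$-projections, and after substituting $\pi_U(v_i) = \sum_j \ell_j(v_i) e_j$ with $\ell_j \in \mV^\vee$, decomposes into at most $(1+k)^d$ expressions of the form $(\prod_{i \in S} \ell_{j_i}(v_i)) \cdot \ti P(\ldots, e_{j_i}, \ldots, \pi_L(v_i), \ldots)$ with $|S| \geq 2$. Peeling off one linear form $\ell_{j_i}$ writes each such expression as $Q \cdot R$ with $\deg Q, \deg R < d$. Summing gives $r_{nc}(P) \leq (1+k)^d$.

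The main obstacle is stage (a): producing a large linear subspace inside $\mX^{\sing}_P$, which is cut out in $\mV$ by the $n$ partials $\partial_i P$ of degree $d-1$. The approach is induction on $d$, invoking either the theorem itself on suitable lower-degree polynomials derived from $\ti P$, or Claim~\ref{serre} applied to the family $\{\partial_i P\}$, while carefully tracking the quantitative bounds. The delicate point is avoiding tower-type blow-up in $m$: the inductive step must yield a linear subspace of codimension bounded by a fixed polynomial in the previous codimension, so that iteration produces a bound polynomial in $m$ with exponent depending only on $d$. Combining (a) and (b) then yields $r_{nc}(P) \leq (1 + k)^d \leq \Gg(d)(1+m)^{\Gg(d)}$ for an appropriate function $\Gg(d)$.
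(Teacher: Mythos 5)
Your stage (b) is fine: polarizing the condition $\ti P(v^{d-1},\cdot)\equiv 0$ over a linear subspace $L$ of codimension $k$ and expanding along a complement does give $r_{nc}(P)\leq (1+k)^d$ (in large characteristic). But stage (a) is the entire content of the theorem, and as you have stated it the implication is false. From $\dim \mX^{\sing}_P\geq n-m$ alone you cannot produce a linear subspace of codimension polynomial in $m$ inside the singular locus. Take $P=\sum_{i=1}^{2m}A_iB_i$ with $A_i,B_i$ generic forms of degree $\geq 2$: the singular locus contains $\{A_i=B_i=0\ \forall i\}$ and has codimension $O(m)$, but the largest linear subspace contained in the zero set of even a single generic quadric (let alone this intersection) has codimension about $n/2$, not $\mathrm{poly}(m)$. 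So the only linear subspaces available have codimension comparable to $n$, and your bound $(1+k)^d$ becomes vacuous. The fallbacks you name do not repair this: Claim \ref{serre} is the non-quantitative Ananyan--Hochster statement whose effectivization is precisely the goal of the paper (and applying it to the family $\{\partial_i P\}$ makes $c=n$ unbounded), and "induction on $d$" is not accompanied by any mechanism for producing the subspace at the inductive step. In short, the proposal defers exactly the hard implication to an unspecified lemma that, in the form stated, has counterexamples.

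The paper's proof runs in the opposite direction and through entirely different tools. Working over a finite field, Proposition \ref{r} shows by a pigeonhole argument on Gowers norms, together with the Mili\'cevi\'c--Janzer bound relating analytic rank and partition rank (Theorem \ref{ar}), that if $r_{nc}(P)$ is large one can choose directions $h_1,\dots,h_m$ so that the family $\bar Q=\{P,P_{h_1},\dots,P_{h_m}\}$ still has large nc-rank. Large nc-rank forces large analytic rank, hence equidistribution of the fibers of $\bar Q$ (Claim \ref{eq}), so the common zero locus of $\bar Q$ has codimension $m+1$; since $\mX^{\sing}_P$ sits inside it, $\kk(P)\geq m$, and the statement for general fields follows by counting points over extensions $k_s$. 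If you want to pursue a contrapositive argument, you would need to replace stage (a) by something like this rank-increment mechanism rather than a geometric search for linear subspaces in the singular locus.
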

\begin{remark} It is sufficient to consider the case when
 $P\in k[x_1,\dots ,x_n]$ is a homogeneous polynomial. Really for such a  polynomial $P$ of degree $d$ there exists a unique homogeneous polynomial $Q \in k[x_0,x_1,\dots ,x_n]$ of degree $d$ such that $Q(1, x_1,\dots ,x_n)=P(x_1,\dots ,x_n)$. Since $r(Q)\geq r(P)$ and $\kk (P)=\kk (Q)$ the inequality $\kk (Q)\geq m$ implies the inequality $\kk (P)\geq m$.
\end{remark}

Now we state the Theorem in  the general case. Let $\bar P=\{ P_i\}_{1\leq i\leq c}$ be a family of polynomials on $V$ of degrees $d_i$, $1\leq i\leq c$.

\begin{theorem}\label{main} There exists a function $\Gg (d,c)$ such that for any field $k$ 
and any family $\bar P$ of degree $\bar d$ and  nc-rank $\Gg (c,d) (1+m)^{\Gg (c,d)}$, we have $\kk (\bar P)\geq m$. \end{theorem}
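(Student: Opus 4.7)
The plan is to reduce Theorem \ref{main} to the single-polynomial case (Theorem \ref{1}) via a generic linear combination argument. Passing first, as in the Remark following Theorem \ref{1}, to homogeneous representatives, I will write the sketch assuming all $P_i$ have the same degree $d$; the case of varying degrees $d_i$ is handled by grouping the family by degree and working within the appropriate graded piece of the strength.

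The key observation is pointwise. At any $v\in \mX^{\sing}_{\bar P}$, the Jacobian matrix $(dP_i(v))_{1\le i\le c}$ has rank strictly less than $c$, so there exists $[\lambda]\in \mP^{c-1}$ with $\sum_i\lambda_i\, dP_i(v)=0$. Combined with $P_i(v)=0$ for every $i$, this says exactly that $v\in \mX^{\sing}_{Q_\lambda}$, where $Q_\lambda:=\sum_i\lambda_iP_i$. Forming the incidence variety
$$\mcZ=\{(v,[\lambda])\in V\times \mP^{c-1}:Q_\lambda(v)=0,\; dQ_\lambda(v)=0\},$$
we see that $\mX^{\sing}_{\bar P}$ lies in the image of $\pi_V:\mcZ\to V$, while the fiber of the other projection $\pi_{\mP^{c-1}}$ over any $[\lambda]$ is exactly $\mX^{\sing}_{Q_\lambda}$.

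Next I invoke two standard consequences of $r_{nc}(\bar P)$ being large. First, the definition of the collective nc-rank of a family ensures that for every $[\lambda]\ne 0$, the polynomial $Q_\lambda$ satisfies $r_{nc}(Q_\lambda)\ge r_{nc}(\bar P)$ (in the mixed-degree case, applied to top-degree parts). Second, a family of sufficiently large nc-rank forms a regular sequence, so $\mX_{\bar P}$ is a complete intersection of pure dimension $n-c$, where $n=\dim V$. Setting $m':=m+2(c-1)$ and applying Theorem \ref{1} to each $Q_\lambda$ with codimension parameter $m'$ gives $\dim\mX^{\sing}_{Q_\lambda}\le (n-1)-m'$, provided $r_{nc}(\bar P)\ge \Gg(d)(1+m')^{\Gg(d)}$. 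Combining with the fiber-dimension inequality,
$$\dim \mX^{\sing}_{\bar P}\le \dim\mcZ\le (c-1)+(n-1-m')=n-c-m,$$
so $\kk(\bar P)\ge m$ as required. Choosing $\Gg(c,d)$ to dominate $\Gg(d)(1+m+2(c-1))^{\Gg(d)}$ uniformly in $m$ yields a function depending only on $c$ and $d$, still polynomial in $1+m$.

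The main obstacle is not the dimension count, which is clean, but the bookkeeping around the definition of $r_{nc}(\bar P)$ for families. Specifically, I must verify (i) that every nonzero linear combination $Q_\lambda$, and the relevant homogeneous part when the $d_i$ differ, has nc-rank controlled from below by $r_{nc}(\bar P)$, and (ii) that $\mX_{\bar P}$ has the expected dimension $n-c$. The first point should follow from the definition of collective nc-rank in Section \ref{sec-def}; the second is a consequence of results in \cite{ah}. Packaging both uniformly in $c$ so as to extract a single explicit $\Gg(c,d)$ is the delicate step.
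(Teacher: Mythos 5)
Your reduction is a genuinely different route from the paper's. The paper does not pass through linear combinations of the $P_i$ at all: it augments the family $\{P^1,\dots,P^s\}$ with $m$ auxiliary polynomials $F^t_{i,s}(P^1_{w_1},\dots,P^s_{w_{tm}})$ built from sums of $s\times s$ determinants of directional derivatives (Theorem \ref{m} and Proposition \ref{rh}), shows by a bias/pigeonhole argument that the $w_j$ can be chosen so that the augmented family still has high nc-rank, and then runs the same point-counting argument as in the $c=1$ case on the enlarged family, whose zero locus contains $\mX^{\sing}_{\bar P}$ by Claim \ref{inverse}. Your incidence variety $\mcZ\subset V\times\mP^{c-1}$ replaces all of that machinery (the polynomials $F^n_s$, Claim \ref{ugly}, Proposition \ref{rh}) with the classical projection trick and a single application of Theorem \ref{1} to each $Q_\lambda$. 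This is conceptually cleaner and the dimension count, including the choice $m'=m+2(c-1)$ and the final extraction of $\Gg(c,d)$, is correct; the mixed-degree bookkeeping is also fine, since $\ti Q_\lambda$ only sees the top-degree part and Claim \ref{ord} lets one work with a graded basis. You do also need, as you note, that $\dim\mX_{\bar P}=n-c$; this is available from \cite{ah} or \cite{kz} under the same rank hypothesis.

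There is, however, one genuine gap, and it sits exactly at your point (i). The fiber-dimension bound $\dim\mcZ\le(c-1)+\max\dim\mX^{\sing}_{Q_\lambda}$ requires control of the fibers over \emph{all} closed points $[\lambda]$ of $\mP^{c-1}$, i.e.\ over $\lambda\in\bar k^c\setminus\{0\}$, and hence a lower bound on $r_{nc}(\sum_i\lambda_iP_i)$ computed over $\bar k$ (or over the residue field $k(\lambda)$) for coefficients $\lambda_i$ that need not lie in $k$. The definition of $r_{nc}(\bar P)$ in Section \ref{sec-def} is a minimum over the nonzero elements of the $k$-linear span only, so the bound you need does \emph{not} "follow from the definition" as you assert: a priori some $\bar k$-combination could have much smaller rank than every $k$-combination. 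This is repairable — the comparison between rank over $k$ and over extensions of $k$ (equivalently, for finite fields, the near-invariance of analytic rank under field extension together with Theorem \ref{ar}) is established in \cite{kz} and is already invoked by the paper in the proof of Theorem \ref{12} — but it is an additional input with its own quantitative loss, and it must be stated and absorbed into $\Gg(c,d)$ rather than waved through. Once that lemma is in place, your argument goes through.
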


\begin{remark} Our proof only shows that the dependence of $C$ on $c$ and $d$ is double exponential.  \end{remark}

Here is an outline of the proof. We denote by $d \bar P :V\to (\bL ^c( V))^\vee$ the  map such that 
$d\bar P (v){(h_1,\dots ,h_c)}:=\det (\D _{\bar P,h_1,\dots ,h_c}(v))$ where 
$\D _{\bar P, h_1,\dots ,h_c}(v) $ is the $(c\times c)$-matrix with elements 
$$\{  \partial P_i/\partial h_j(v)\}, \  1\leq i,j \leq c.$$ Let  $\wt {d\bar P}(v):= (\bar P(v),d\bar P (v))$. The following statement is well  known.

\begin{claim}\label{inverse} 
 $\mX^{\sing}_{\bar P} =\wt {d\bar P}^{-1}(0)$. 
\end{claim}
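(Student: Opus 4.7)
The plan is to recognize this as a reformulation of the Jacobian criterion for smoothness. First I would unpack $\wt{d\bar P}^{-1}(0)$: since $\wt{d\bar P}(v)=(\bar P(v), d\bar P(v))$, the vanishing is equivalent to the conjunction of $\bar P(v)=0$ (i.e.\ $v\in \mX_{\bar P}$) and $d\bar P(v)=0$ as an element of $(\bL^c V)^\vee$. So the task reduces to showing that for $v\in \mX_{\bar P}$, the condition $d\bar P(v)=0$ is equivalent to $v$ being a singular point of $\mX_{\bar P}$.

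Next I would translate the condition $d\bar P(v)=0$ into a rank condition on the Jacobian $J(v) = (\partial P_i/\partial x_j(v))_{i,j}$. By the very definition of $d\bar P$,
\[
 d\bar P(v)(h_1,\dots,h_c) = \det\bigl(\partial P_i/\partial h_j(v)\bigr)_{1\leq i,j\leq c}
\]
is a $c\times c$ minor of $J(v)$ (after expressing $h_1,\dots,h_c$ in a basis, by multilinearity). Hence the alternating $c$-form $d\bar P(v)$ vanishes identically iff all $c\times c$ minors of $J(v)$ vanish iff $\operatorname{rank} J(v) < c$.

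Finally I would invoke the Jacobian criterion. For $v\in \mX_{\bar P}$, $v$ is a smooth point of $\mX_{\bar P}$ iff $\operatorname{rank} J(v) = c$. In one direction, if $\operatorname{rank} J(v)=c$, the algebraic implicit function theorem shows $\mX_{\bar P}$ is a smooth complete intersection of codimension $c$ near $v$; in the other, the Zariski tangent space at $v$ is $\ker J(v)$, and at a smooth point of the $(\dim V - c)$-dimensional scheme $\mX_{\bar P}$ this kernel must have dimension $\dim V - c$, forcing $\operatorname{rank} J(v) = c$. Combining the three steps, $v \in \mX^{\sing}_{\bar P}$ iff $v \in \mX_{\bar P}$ and $d\bar P(v)=0$, iff $\wt{d\bar P}(v)=0$.

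The only subtle point is the Jacobian criterion itself in the case $\mX_{\bar P}$ is not a complete intersection at $v$, i.e., $\dim_v \mX_{\bar P} > \dim V - c$. But in that case $\operatorname{rank} J(v) < c$ automatically, and $v$ is singular in the scheme-theoretic (Fitting-ideal) sense, so both sides of the claimed equality contain $v$ and the identification still holds; this is really the only place one has to be mildly careful, and it matches the standard convention underlying the definition of $\mX^{\sing}_{\bar P}$.
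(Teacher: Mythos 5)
Your proof is correct; the paper offers no argument at all for this claim (it is dismissed as ``well known''), and what you give is exactly the intended standard argument: vanishing of the alternating form $d\bar P(v)$ is vanishing of all $c\times c$ minors of the Jacobian, and then the Jacobian criterion finishes it. You also correctly isolate the one genuine subtlety (points where $\mX_{\bar P}$ fails to be a complete intersection of codimension $c$), where the stated equality holds under the Fitting-ideal convention for the singular subscheme --- and in any case the paper only ever uses the inclusion $\mX^{\sing}_{\bar P}\subset \wt{d\bar P}^{-1}(0)$, which your argument establishes unconditionally.
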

For any subspace  $L\subset \bL ^c(V)$ we denote by $p_L:\bL ^c(V)\to L^\vee$ the natural projection and by $\bar Q_L$ the composition 
$p_L\circ \wt {d\bar P} :V\to L^\vee$. 
We see that  for a proof of Theorem \ref{main} it is sufficient to find a {\it good}
upper bound on  $\dim (\wt {d\bar P}^{-1}(0))$. To obtain such a bound 
we construct a {\it large} subspace $L\subset \bL ^c(V)$ such that the map 
$\bar Q_L$ is of analytic rank $>\dim(L)$. Then (see  Claim \ref{eq} )
all fibers of $\bar Q_L$ are of dimension $\dim(V)-\dim(L)$. Since $\wt {d\bar P}^{-1}(0)) \subset \bar Q_L ^{-1}(0) $ we see 
that the codimension of $\mX ^{\sing}_{\bar P} $ in $\mX _{\bar P}$ is not less than $\dim(L)-c$.

\section{definitions}\label{sec-def}

Let $k$ be a field and $\bar k$ it algebraic closure.
 \begin{definition}Let $P$ be a polynomial of degree $d$ on a  $k$-vector space $V$.
\begin{enumerate} 
\item  We denote by $\ti P :V^d\to k$ the multilinear symmetric form associated with $P$ defined by 
$\ti P(h_1, \ldots, h_d) : =  \Delta_{h_1} \ldots  \Delta_{h_d} P: V^d \to k$, where $\Delta_hP(x) = P(x+h)-P(x)$.
\item For a family  $\bar P =\{P_i\} _{1\leq i\leq c}$, of polynomials on $\mV$ we denote by 
$\mX _{\bar V}\subset \mV$ the subscheme defined by the system $\{v: P_i(v)=0\}$ of equations.
\item For a polynomial $P \in k[x_1,\dots ,x_N] $ of degree $d>1$ we define the {\it  rank} $r(P)$ as the 
minimal number $r$ such that $P$ can be written
as a sum $P=\sum _{j=1}^rQ_iR_i$ where $Q_j,R_j \in k[x_1,\dots ,x_N] $ are polynomials of degrees $<d$ \footnote{ Also known 
as  {\em $h$-invariant}, or {\em strength}. 
For tensors it is known as the  {\em partition rank}.}. For $d=1$ we define the rank to be infinite and for $d=0$ we define it as $0$.
\item  We define the  {\em non-classical rank (nc-rank)} $r_{nc}(P)$ to be the rank of $\ti P$.
 \item For a $d$ tensor $T : V^d \to k$ we define the {\em partition rank (p-rank)}  $pr(T)$ as the minimal 
number $r$ such that $T$ can be written as a sum $T(x_1, \ldots, x_d)=\sum _{ J_i \subset [1,d], i \in [r]} Q_j((x_l)_{l \in J_i})R_j((x_l)_{l \in J^c_i})$ where $Q_j,R_j$ are degree $<d$ tensors.
\item If $k=\mF _q$ and  $P:V\to k$,  we define  
$b(P)= \sum _{v\in V}\psi (T( v)) /q^{\dim (V)}$ where $\psi :k\to \mC ^\star$ is a non-trivial additive
 character and define the {\it analytic rank } $ar(P)$ as  $-\log_q(b(\ti P))$.
\item For a linear subspace  $ L \subset k[x_1,\dots ,x_N]$ we define $r(L):=\min _{P\in  L -\{ 0\}}r(P)$, and $r_{nc}(L), ar(L)$ similarly.
\item For a linearly independent family  $\bar P =\{P_i\}_{1\leq i\leq c}$, of polynomials we define 
$r(\bar P):=r(L(\bar P))$, where  $L(\bar P) $ is the linear span of $\bar P$, and  $ r_{nc}(\bar P), ar(\bar P)$ similarly.
\item For a subscheme $\mX \subset \mV$ we define $r(\mX), r_{nc}(\mX), ar(\mX),$ as the maximal rank of $r(\bar P), r_{nc}(\bar P), ar(\bar P)$ for 
families   $\bar P =\{P_i\}_{1\leq i\leq c}$, such that $\mX =\mX _{\bar P}$.
\item For a family $\bar P$ we denote by $\kk (\bar P)$ the 
codimension of the singular locus $\mX ^{\sing}_{\bar P} \subset \mX _{\bar P}$.
\end{enumerate}
\end{definition}

\begin{remark}\label{pd} 
\begin{enumerate}
\item If char$(k) >d$ then $r(P) \le r_{nc}(P)$.
\item 
In low characteristic it can happen that $P$ is of high rank and $\ti P$ is of low rank, for example in characteristic $2$ the polynomial $P(x) = \sum_{1 <i<j<k <l\le n} x_ix_jx_kx_l$ is of rank $\sim n$, but of nc-rank $3$, see 
\item $ r_{nc}(P) \le  pr(\ti P) \le C_dr_{nc}(P)$, $C_d \le 4^d$. (see \cite{kz}). In char $>d$, $C_d=1$. 
\end{enumerate}
\end{remark}

\begin{remark}\label{11}\leavevmode
\begin{enumerate}
\item For any tensor $T$ we have that $b(T)\geq 0$ and does not depend on a choice of a character $\psi$.
\item For any tensor $T$, we have that $pr(T) \le -\log_q b(T)$ (see \cite{kz-approx}, \cite{lovett-rank}). 
\item $|b(P)|^{2^d} \leq b(\ti P) $.
\end{enumerate}
\end{remark}

\begin{claim}\label{ord}Let $\bar P=\{ P_i\}_{l=1}^c$ be 
 a family of polynomials with  $r(\bar P)\neq 0$. Then
\begin{enumerate} 
\item $\dim(L(\bar P))=c$.
\item There exists a basis $Q_i^j$ in $L(\bar P)$ such that 
\begin{enumerate} 
\item $\deg (Q^j_i)=j$.
\item For any $j$ the family $\bar Q^j=\{ Q_i^j\}$ is of rank $\geq r(\bar P)$.
\end{enumerate}
\end{enumerate}
\end{claim}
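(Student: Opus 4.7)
Part (1) is essentially a definitional observation. The rank of a family is taken over nonzero $k$-linear combinations of its members, i.e., $r(\bar P) = \min_{0 \neq \bar a \in k^c} r\bigl(\sum_i a_i P_i\bigr)$ with the convention $r(0) = 0$. Any nontrivial linear relation $\sum_i a_i P_i = 0$ would therefore exhibit a nonzero $\bar a$ witnessing rank $0$, forcing $r(\bar P) = 0$. So the hypothesis $r(\bar P) \neq 0$ forces the $P_i$ to be linearly independent, giving $\dim L(\bar P) = c$.

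For part (2), I would construct a basis of $L := L(\bar P)$ adapted to the degree filtration. Set $F_j := \{P \in L : \deg P \leq j\}$, yielding an ascending chain $F_0 \subseteq F_1 \subseteq \cdots \subseteq F_d = L$. For each $j$, pick elements $Q_i^j \in F_j$ whose images in $F_j/F_{j-1}$ form a basis of the quotient. Any element of $F_j \setminus F_{j-1}$ has degree exactly $j$, so each chosen $Q_i^j$ satisfies $\deg(Q_i^j) = j$. Concatenating over $j$ yields a basis $\{Q_i^j\}$ of $L$ satisfying (2a).

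The rank bound (2b) is then immediate from monotonicity of the minimum: for each fixed $j$, the span $L(\bar Q^j)$ is a subspace of $L$, so
$$r(\bar Q^j) = \min_{Q \in L(\bar Q^j) - \{0\}} r(Q) \geq \min_{Q \in L - \{0\}} r(Q) = r(\bar P).$$
The whole argument is a standard filtration-adapted basis construction; I do not expect a substantive obstacle. The only points worth recording explicitly are that the convention for $r$ on families of possibly dependent polynomials is interpreted as above (making (1) tautological), and that the lifts through $F_j/F_{j-1}$ automatically have literal degree $j$, which is immediate from the definition of $F_j$.
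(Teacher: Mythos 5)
Your proof is correct; the paper states Claim \ref{ord} without any proof, so there is no argument of the authors' to compare against, and your filtration-adapted basis construction together with the monotonicity $L(\bar Q^j)\subseteq L(\bar P)$ of the minimum defining the rank is exactly the intended (and standard) argument. The only point worth noting is that the paper's Definition (8) already restricts $r(\bar P)$ to linearly independent families, so part (1) is tautological under the paper's convention as well as under your extended one.
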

\begin{remark} Since the subscheme $\mX _{\bar P} $ depends only on the space $L(\bar P)$,  we can (and will)  assume that all the families $\bar P$ we consider satisfy the conditions of Claim \ref{ord} on $\bar Q$.
\end{remark}


\begin{definition}\label{def-rank}Let $L\subset k[x_1,\dots ,x_N]$ be a finite dimensional subspace.
\begin{enumerate}
\item We denote by $\phi _L:V\to L^\vee$ the map given by $\phi _L (v)(P):=P(v)$ for  $P\in L$ and  $v\in V$. 
\item For $\bl \in L^\vee$ we define $f(\bl):=|V| ^{-1}| \phi _L ^{-1}(\bl)|$.
\item We denote by $\hat f:L\to \mC$ the  Fourier transform of $f$ 
given by 
$$\hat f(l):=\sum _{\bl \in L^\vee}\psi (\langle \bl ,l \rangle)f(\bl).$$
\item For  a polynomial $P$  on $V, h\in V$ we write $P_h(v)=P(v+h)-P(v)$.
\item For a subspace $W\subset V$ we denote by $L(W)$ the subspace of polynomials on $V $ spanned by $P$ and $P_w, \ w\in W$. 
\end{enumerate}
\end{definition}

\section{Results from the additive combinatorics} 
From now on we fix a non-trivial additive character  $\psi :k\to \mC ^\star$.
 We denote $\mE_{s \in S} = |S|^{-1}\sum_{s \in S}$. Denote by $U_d$ the Gowers $U_d$-uniformity norm 
for functions $f:V \to \mC$ defined by:
\[
\|f\|_{U_d}^{2^d} = \mE_{x, v_1, \ldots. v_d \in V} \prod_{\omega \in \{0,1\|^d} f^{\omega} (x +\sum_{i=1}^d \omega_i v_i)
\]
where $f^{\omega} = \bar f$ if $|\omega| = \sum_{i=1}^d \omega_i$ is odd and otherwise  $f^{\omega} = f$.

\begin{claim}[\cite{kz-approx,lovett-rank}]\label{Ga}
For any $d>0$ and a polynomial $P:V \to k$  of degree $d$ we have $ar(P)\leq pr(\ti P)$, i.e.
\[
\mE_{h \in V^d} \psi(\ti P(h_1, \ldots, h_d))=\| \mE_{x \in V} \psi(P(x))\|^{2^d}_{U_d} > q^{-pr(\ti P)}.
\]
\end{claim}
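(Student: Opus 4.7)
My plan is to treat the two halves of the statement separately: the Gowers-norm identity on the left, and the partition-rank lower bound on the right.

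For the identity $\|\mE_x\psi(P(x))\|_{U_d}^{2^d}=\mE_h\psi(\ti P(h))$, I would set $f=\psi\circ P$ and unfold the definition of $\|\cdot\|_{U_d}$. Using that $\psi$ is a character and $f^\omega=\psi((-1)^{|\omega|}P)$, the product over $\omega\in\{0,1\}^d$ collapses to a single character applied to
$$\sum_{\omega\in\{0,1\}^d}(-1)^{|\omega|}P\!\left(x+\textstyle\sum_i\omega_iv_i\right)=\Delta_{v_1}\cdots\Delta_{v_d}P(x).$$
Since $\deg P=d$, the iterated difference is the constant $\ti P(v_1,\ldots,v_d)$, independent of $x$; averaging out $x$ is therefore trivial and yields the claimed equality. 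Incidentally, this identity also shows the expression is real and non-negative, matching Remark \ref{11}(1).

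For the lower bound $b(\ti P)\geq q^{-r}$ with $r=pr(\ti P)$, I would follow the strategy of \cite{kz-approx,lovett-rank}: fix a partition-rank decomposition $\ti P(h)=\sum_{i=1}^r Q_i(h_{J_i})R_i(h_{J_i^c})$ and proceed by induction on $r$. The base case $r=0$ gives $\ti P=0$ and $b=1$. For the induction step I would isolate one term $Q_jR_j$ using a Cauchy--Schwarz / Fourier-inversion step over one coordinate direction, reducing either the partition rank of the remaining part or the number of free variables, while paying at most a factor of $q^{-1}$. An auxiliary route is to introduce $r$ dummy scalars $y_i\in k$ and form $F(h,y)=\sum_iy_iQ_i(h_{J_i})R_i(h_{J_i^c})$; computing $\mE_{h,y}\psi(F)$ in two different orders yields
$$q^{-r}\sum_{y\in k^r}b(T_y)=\Pr_h\bigl(Q_i(h_{J_i})R_i(h_{J_i^c})=0\text{ for all }i\bigr),$$
where $T_y(h):=F(h,y)$ has partition rank $\leq r$; non-negativity of $b$ on these tensors isolates the trivial term $b(T_0)=1$ and delivers the density bound $\Pr_h(Q_iR_i=0\,\forall i)\geq q^{-r}$, which is then upgraded to a lower bound on $b(\ti P)$.

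The main obstacle is the inductive Cauchy--Schwarz step, because the index sets $J_i$ appearing in the partition-rank decomposition need not share a common coordinate splitting. A naive Cauchy--Schwarz over one variable risks amplifying rather than reducing the partition rank of the resulting tensor, and the transfer from the density of the zero locus of $(Q_1R_1,\ldots,Q_rR_r)$ to the exponential sum $b(\ti P)$ is delicate when the $J_i$ are mutually incomparable. The technical heart of \cite{kz-approx,lovett-rank} is a careful choice of coordinate to Cauchy--Schwarz in at each step, combined with the non-negativity $b(T)\geq 0$ of Remark \ref{11}(1) to prevent sign cancellations from spoiling the telescoping estimate. Once this is handled, the bound $b(\ti P)\geq q^{-r}$ follows, and the strict inequality in the statement holds since $q^{-r}>0$.
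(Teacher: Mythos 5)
The paper itself gives no proof of this claim---it is imported from \cite{kz-approx,lovett-rank}---so your proposal has to stand on its own. The first half does: unfolding the $U_d$-norm of $\psi\circ P$ collapses the product over $\omega$ to $\psi$ of an iterated difference, which is the constant $\ti P(v_1,\ldots,v_d)$ because $\deg P=d$. (Your displayed sign is off by $(-1)^d$ relative to $\Delta_{v_1}\cdots\Delta_{v_d}P$, but this is harmless since $\mE_v\psi(-\ti P(v))=\mE_v\psi(\ti P(v))$ via $v_1\mapsto -v_1$ and multilinearity.)

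The second half has a genuine gap at its last step. Your dummy-variable computation is correct as far as it goes: each $T_y=\sum_i y_iQ_iR_i$ is an honest $d$-multilinear form (every variable occurs linearly in exactly one factor of each summand), so $b(T_y)\ge 0$ and you legitimately get $\Pr_h\bigl[Q_i(h_{J_i})R_i(h_{J_i^c})=0\ \forall i\bigr]\ge q^{-r}$. But that is a lower bound on the density of the joint zero locus of the summands, not on $b(\ti P)=\mE_h\psi\bigl(\sum_iQ_iR_i\bigr)$: off that locus $\psi(\ti P(h))$ is an arbitrary root of unity and can cancel the main term, so ``is then upgraded to a lower bound on $b(\ti P)$'' is exactly the missing content, not a routine conversion. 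The standard bridge is the identity $b(T)=\Pr_{h_1,\ldots,h_{d-1}}\bigl[T(h_1,\ldots,h_{d-1},\cdot)\equiv 0\bigr]$ for multilinear $T$ (average over the last variable first; the inner average is an indicator). Arranging $d\notin J_i$ for every $i$, this gives $b(\ti P)\ge\Pr_{h_{[d-1]}}\bigl[Q_i(h_{J_i})=0\ \forall i\bigr]$---but now your dummy-variable trick no longer applies, because $\sum_i y_iQ_i$ is only multi-affine (the $J_i$ are distinct proper subsets of $[d-1]$), and the bias of a multi-affine form can be negative. This is why the cited proofs run instead through two lemmas: $b(QR)\ge q^{-1}$ for a single partition-rank-one tensor (by the conditioning argument above), and the sub-additivity $b(T_1+T_2)\ge b(T_1)\,b(T_2)$ for multilinear forms, whose Cauchy--Schwarz induction is precisely the step you acknowledge but do not supply. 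Without one of these (or an equivalent correlation inequality for multilinear varieties) the argument does not close. A minor final point: the argument yields $b(\ti P)\ge q^{-r}$, not the strict inequality as printed; your remark that strictness follows ``since $q^{-r}>0$'' is a non sequitur, though the intended statement is surely the weak inequality.
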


 The following result is proven in \cite{Mi} (and independently, with a slightly weaker bound that depends on the field $k$, by \cite{janzer})
 
\begin{theorem}\label{ar}There exists a triply exponential function $C(d)$ and double exponential function $D(d)$ such that
$ pr(\ti P)\leq C(d)(ar(P)^{D(d)}+1)$ for all polynomials of degree $d$.
\end{theorem}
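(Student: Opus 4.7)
The plan is to prove the contrapositive form of the statement by induction on the degree $d$, following the strategy that $|\mathbb{E}_v \psi(P(v))|$ large forces $\ti P$ to have small partition rank. The base case $d = 2$ is essentially linear algebra: for a symmetric bilinear form $B$, one has the exact identity $\mathbb{E}_{x,y}\psi(B(x,y)) = q^{-\text{rank}(B)}$, so $ar(P) = r_{nc}(P) = pr(\ti P)$ up to an additive constant, and the conclusion holds with $C(2), D(2) = O(1)$.

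For the inductive step, start from $ar(P) \le r$, which means $\mathbb{E}_{h_1,\dots,h_d}\psi(\ti P(h_1,\dots,h_d)) \ge q^{-r}$ by Claim \ref{Ga}. Apply a Cauchy--Schwarz derivation in the variable $h_1$ to obtain a statement of the form
\[
\mathbb{E}_{y \in V}\Bigl|\mathbb{E}_{h_2,\dots,h_d}\psi\bigl(\ti P_y(h_2,\dots,h_d)\bigr)\Bigr|^{2} \ge q^{-2r},
\]
where $\ti P_y(h_2,\dots,h_d) := \ti P(y,h_2,\dots,h_d)$ is a degree-$(d{-}1)$ multilinear form depending linearly on $y$. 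By Markov, a density $\ge q^{-2r}$ of parameters $y$ yield forms $\ti P_y$ of analytic rank $\le 3r$ (say). The inductive hypothesis then gives, for each such $y$, a partition-rank decomposition of $\ti P_y$ of size at most $C(d-1)\bigl((3r)^{D(d-1)}+1\bigr)$.

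The heart of the argument is to assemble this family of decompositions, parametrized by $y$, into a single partition-rank decomposition of $\ti P$. The standard device is a polynomial regularity lemma: one passes to a subspace (or to a refinement of the factor generated by the pieces of the decompositions) on which the collection of lower-degree forms appearing across all admissible $y$ becomes jointly high-rank, so that their span has controlled dimension. On such a refinement, the linear-in-$y$ dependence of $\ti P_y$ forces a global decomposition of $\ti P$ of controlled partition rank. The bounds $C(d)$ and $D(d)$ are then obtained by tracking how the rank parameter is amplified at each step: each inductive call worsens the polynomial exponent by a factor (giving the double-exponential $D(d)$), and each regularization step produces an additional tower of constants (giving the triple-exponential $C(d)$).

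The main obstacle, and where essentially all the technical work lies, is this gluing/regularization step: a priori the partition-rank decompositions of $\ti P_y$ for different $y$ are entirely unrelated, and one needs to show they can be chosen coherently. This requires a quantitative version of Green--Tao--Kaufman--Lovett type regularity in the multilinear setting, combined with the fact that passing to a bias-preserving substructure costs only a bounded (in $r$) amount of rank. The alternative approach of Janzer avoids some of this by a direct tensor-analytic argument, but with field-dependent constants; Milićević's approach, which gives the uniform bound used here, is precisely the induction-plus-regularity strategy outlined above.
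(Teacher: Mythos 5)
First, note that the paper does not prove Theorem \ref{ar} at all: it is imported as a black box from Mili\'cevi\'c \cite{Mi} (and, with field-dependent constants, from Janzer \cite{janzer}). So you are attempting to reprove a substantial external theorem, and your sketch must be judged on its own.

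There is a genuine gap, and it sits exactly where you place ``essentially all the technical work.'' The step that assembles the partition-rank decompositions of the slices $\ti P_y$ (for a $q^{-O(r)}$-dense set of $y$) into a single decomposition of $\ti P$ is not an implementation detail that can be outsourced to ``a quantitative version of Green--Tao--Kaufman--Lovett type regularity'': those regularity lemmas have tower-type (or worse) dependence on the rank parameter being regularized, and here the collection of lower-degree forms you would need to regularize has size growing polynomially in $r$ (each admissible $y$ contributes $\approx C(d-1)\bigl((3r)^{D(d-1)}+1\bigr)$ pieces). Running a regularity lemma over such a family produces bounds that are tower-type \emph{in $r$}, which is incompatible with the conclusion $pr(\ti P)\leq C(d)(ar(P)^{D(d)}+1)$, where the dependence on $ar(P)$ must be polynomial and only the $d$-dependent prefactors may blow up. This is precisely why the polynomial bound was open for some time after the qualitative statement was known: Mili\'cevi\'c's argument is engineered to \emph{avoid} regularity lemmas, replacing the gluing step by a direct iterative/combinatorial argument in which the loss at each stage is polynomial in $r$, and Janzer's tensor-analytic route likewise bypasses regularization. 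Your accounting of the constants reflects the same confusion: ``an additional tower of constants'' at each regularization step is not triply exponential in $d$, and if the tower height depends on $r$ the theorem is lost entirely. The base case and the Cauchy--Schwarz/Markov slicing step are fine, but as written the proposal reduces the theorem to a lemma that is false in the quantitative form you need.
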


We rewrite Theorem \ref{ar} in the following way:
For any $d>0$ there exists $ A_d, B_d>0$ such that  for any 
polynomial $P:V \to k$  of degree $d$, we have that  if 
\[
\mE_{x \in V^d} \psi(\ti P(x))> q^{-s},
\]
then $pr(\ti P) <  A_d(s^{B_d}+1)$.  The dependence of the constants on $d$ is double exponential for $B_d$ snd triply exponential for $A_d$. 

Alternatively, for any  polynomial $P:V \to k$  of degree $d$  there exists $\alpha_d, \beta_d >0$ such that it  $pr(\ti P) >r$ we have that 
\[
|\mE_{x \in V} \psi(P(x))| \le \|\mE_{x \in V} \psi(P(x))\|_{U_d} < q^{-(\alpha_dr-1)^{\beta_d}}.
\]

\begin{claim}\label{eq} If $\ar (L)>\dim(L)$ then  $|f(\bl)/f(\mu)-1|\leq 1/2$ 
for all $\bl ,\mu \in L^\vee(W) $.
\end{claim}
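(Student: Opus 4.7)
The approach is discrete Fourier inversion on the finite abelian group $L^\vee$, with the analytic rank hypothesis controlling the non-constant modes.

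First, identify $\hat f$ as an exponential sum. Unfolding the definition of $f$ through the fibers of $\phi_L$, for any $l\in L$ viewed as a polynomial on $V$,
\[
\hat f(l) \;=\; \sum_{\bl \in L^\vee}\psi(\langle \bl,l\rangle)f(\bl) \;=\; \frac{1}{|V|}\sum_{v\in V}\psi(l(v)) \;=\; b(l),
\]
so in particular $\hat f(0)=1$. Fourier inversion on $L^\vee$ then yields
\[
f(\bl) \;=\; \frac{1}{|L^\vee|}\Bigl(1 + \sum_{l\in L\setminus\{0\}}\psi(-\langle \bl,l\rangle)\hat f(l)\Bigr),
\]
and subtracting the analogous identity for $f(\mu)$ and applying the triangle inequality gives
\[
|f(\bl)-f(\mu)| \;\le\; \frac{2}{|L^\vee|}\sum_{l\in L\setminus\{0\}}|\hat f(l)|.
\]

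Next, bound the off-diagonal Fourier mass using the analytic-rank hypothesis. For each nonzero $l\in L$ one has $\ar(l)\ge \ar(L)>\dim L$, so the Gowers--Cauchy--Schwarz relation $|b(l)|^{2^d}\le b(\ti l) = q^{-\ar(l)}$ forces $|\hat f(l)|=|b(l)|$ to be much smaller than $|L^\vee|^{-1}=q^{-\dim L}$. Summing over the $q^{\dim L}-1$ nonzero $l\in L$, one obtains $\sum_{l\neq 0}|\hat f(l)|\le \ep$ for a sufficiently small absolute constant $\ep$. Then $f(\mu)\ge (1-\ep)/|L^\vee|$ and $|f(\bl)-f(\mu)|\le 2\ep/|L^\vee|$, so $|f(\bl)/f(\mu)-1|\le 2\ep/(1-\ep)\le 1/2$ provided $\ep\le 1/5$.

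The substantive point is the second step: one must calibrate the Gowers estimate so that the stated threshold $\ar(L)>\dim L$ actually drives the total Fourier mass below $1/5$. Any slack between $\ar(l)$ and $-\log_q|b(l)|$ has to be absorbed by either sharpening the hypothesis or by organizing the Fourier expansion by degree so that the worst-case exponential-sum bound is applied only on a low-dimensional piece. The restriction $\bl,\mu\in L^\vee(W)$ plays no role beyond guaranteeing $f(\mu)>0$, which is automatic from the lower bound produced above; the remainder of the argument is a routine Plancherel computation.
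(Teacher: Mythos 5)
Your proposal is correct and takes essentially the same route as the paper's proof: Fourier inversion on $L^\vee$ with $\hat f(l)=b(l)$ and $\hat f(0)=1$, reducing the claim to the smallness of the nonzero Fourier modes (the paper reduces to $|\hat f(l)|\leq q^{-\dim(L)-1}$ for $l\neq 0$ and invokes the hypothesis $\ar(L)>\dim(L)$). The calibration issue you flag --- that $\ar(l)>\dim(L)$ directly bounds only $b(\ti l)$, and passing to $|b(l)|$ via $|b(l)|^{2^d}\leq b(\ti l)$ costs a $2^d$-th root in the exponent --- is genuine, but the paper's own one-line proof elides it in exactly the same way, and in the application the analytic rank is taken so much larger than $\dim(L)$ that the constant is immaterial.
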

\begin{proof} Since $f(\bl)= |L|^{-1}\sum _{l\in L}\psi (\langle\bl ,l\rangle)\hat f(l)$ and $\hat f(0)=1$
it is sufficient to show that $|\hat f(l)|\leq 1/q^{\dim( L)+1}$ for $l\in L-\{0\}$. But this inequality follows immediately from the definition of $\ar(L)$ and the inequality $\ar (L)>\dim(L)$.
\end{proof}

\section{The case of one a hypersurface}
 In this section we present a proof of Theorem \ref{main} in 
$c=1$.
\begin{theorem}\label{12} There exists a function $\Gg (d)$ such that $\kk (P)\geq m$ for any polynomial  $P$ degree $d$ and of nc-rank $\Gg (d) (1+m)^{\Gg (d)}$.
\end{theorem}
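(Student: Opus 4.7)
The plan is to follow the outline of the introduction specialized to $c=1$. By Claim \ref{inverse}, $\mX^{\sing}_P = \wt{dP}^{-1}(0)$ with $\wt{dP}(v) = (P(v), dP(v)) \in k \oplus V^\vee$, so it suffices to bound $\dim \wt{dP}^{-1}(0)$ from above. I will construct a subspace $W \subset V$ of dimension $m$ such that the polynomial subspace $L := \operatorname{span}(P, \partial P/\partial w : w \in W)$ has dimension $m+1$ and satisfies $\ar(L) > m+1$. Granted this, Claim \ref{eq} forces $\dim \phi_L^{-1}(0) \leq \dim V - (m+1)$. Since $\wt{dP}^{-1}(0) \subset \phi_L^{-1}(0)$ and $\dim \mX_P = \dim V - 1$ (as $P$ of high rank is irreducible), we obtain $\kk(P) \geq m$.

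A nonzero element of $L$ has the form $\alpha P + \sum_{j=1}^i \beta_j \partial P/\partial w_j$. If $\alpha \neq 0$, its associated $d$-tensor equals $\alpha \wt P$ (lower-degree terms contribute nothing at top symmetric order), so its nc-rank is $r_{nc}(P)$ itself. If $\alpha = 0$, the combination equals $\partial P/\partial w^*$ for $w^* = \sum_j \beta_j w_j$, whose associated $(d-1)$-tensor is proportional to $\wt P(w^*, \cdot, \dots, \cdot)$. Hence the invariant I need to maintain during the construction is: for every nonzero $w \in \operatorname{span}(w_1,\ldots,w_i)$, the tensor $\wt P(w, \cdot, \ldots, \cdot)$ has partition rank at least some threshold $R_i$.

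I choose the $w_j$'s greedily, keeping $R_i \geq R - i\delta(d)$ for a loss $\delta(d)$ depending only on $d$, where $R = \Gg(d)(1+m)^{\Gg(d)}$ is the nc-rank of $P$. The key lemma is: if $\wt P$ has partition rank $R$, then $\wt P(w,\cdot,\ldots,\cdot)$ has partition rank $\geq R - \delta(d)$ for all but a $q^{-c(d)R}$-fraction of $w \in V$. This is proved by translating a low-rank $\wt P(w,\cdot)$ into a character-sum bias via Claim \ref{Ga}, averaging over $w$, and using Theorem \ref{ar} to contradict the high partition rank of $\wt P$. At each inductive step a Markov/union bound over the finite subspace $\operatorname{span}(w_1,\ldots,w_i)$ of bad directions produces a $w_{i+1}$ preserving the invariant. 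After $m$ steps $R_m \geq R - m\delta(d)$, and Theorem \ref{ar} converts this to $\ar(L) > m+1$, provided $R_m \geq A_d((m+1)^{B_d}+1)$; this is secured by choosing $\Gg(d)$ of order $B_d$, doubly exponential in $d$.

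The main obstacle is controlling the quantitative loss in the partial-evaluation step. A naive approach invoking Theorem \ref{ar} at each of the $m$ iterations would lose a factor polynomial of double-exponential degree per step, compounding to a tower dependence in $m$. To keep $\delta(d)$ independent of $m$, the induction must be carried in the character-sum (bias) language throughout, applying Theorem \ref{ar} only once at the very end to convert the accumulated bias bound into the partition-rank / analytic-rank lower bound required by Claim \ref{eq}. Getting this decoupling cleanly—so that the cost of enlarging the span by one derivative is absorbed at the level of $L^1$ character-sum estimates rather than rank estimates—is the crux of the argument.
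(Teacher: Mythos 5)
Your proposal is correct in outline and reaches the same destination by the same global route as the paper: augment $P$ by $m$ derivatives to get a family $\bar Q$ (your $L$), prove that every nonzero element of its span has large analytic rank, invoke Claim \ref{eq} to pin down the fiber dimension of $\phi_L$, and conclude via $\mX_P^{\sing}\subset \phi_L^{-1}(0)$. Where you genuinely differ is in the proof of the key proposition (the paper's Proposition \ref{r}). The paper argues by contradiction: it pigeonholes a single coefficient vector $b$ over all tuples $\bar w$, then differences two instances to produce a single $P_{w_1-w_1'}$ of low nc-rank for a positive proportion of pairs, and feeds this into $\|\psi(P)\|_{U_d}^{2^d}=\mE_w\|\psi(P_w)\|_{U_{d-1}}^{2^{d-1}}$. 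You instead observe that every nonzero element of $L$ with no $P$-component has associated tensor $\ti P(w^*,\cdot,\dots,\cdot)$ for a single direction $w^*$ in the span, so it suffices to control partial evaluations of $\ti P$; a Markov bound on $b(\ti P)=\mE_w b(\ti P(w,\cdot))$ plus a union bound over the $q^m$ nonzero points of a random $m$-dimensional subspace does the job. This is cleaner: it avoids the subtraction step entirely, uses only one application of Theorem \ref{ar} (to $P$ itself), and makes the containment of the singular locus literally true because you use directional derivatives $\partial P/\partial w$ rather than the discrete derivatives $P_w$ the paper writes (the two have the same associated multilinear forms, so all rank estimates coincide, but your choice is the one for which $\wt{dP}^{-1}(0)\subset\phi_L^{-1}(0)$ holds on the nose).

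Two caveats. First, your key lemma as literally stated --- partition rank of $\ti P(w,\cdot)$ drops only by an additive $\delta(d)$ for most $w$ --- is not what the available tools deliver: running the bias argument gives that for all but a $q^{-s}$-fraction of $w$ the \emph{analytic} rank drops by at most $s$, and converting the hypothesis $pr(\ti P)\geq R$ into an analytic-rank lower bound via Theorem \ref{ar} costs a polynomial $R\mapsto (R/A_d)^{1/B_d}$, not an additive constant. Your closing paragraph effectively concedes this and prescribes the correct fix (carry the induction in bias language, convert ranks only once), and since the final target is only $\ar(L)>m+1$, the polynomial loss is harmless and yields exactly the bound $\Gg(d)(1+m)^{\Gg(d)}$; but the lemma should be stated in that weaker, provable form from the start. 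Second, Claim \ref{eq} gives a cardinality bound over one finite field; to convert $|\phi_L^{-1}(0)|\leq \tfrac32 q^{\dim V-(m+1)}$ into $\dim\phi_L^{-1}(0)\leq\dim V-(m+1)$ you must run the estimate over all extensions $k_s$ (checking the rank hypotheses persist) and then treat general fields, which the paper does by citing Sections 3.5--3.6 of \cite{kz}; your sketch omits this step.
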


\begin{proof}

 To simplify notations we define {\em $$T=T(r,m,d):= [(r/A_d)^{1/B_d} -m]/2C_d ,$$}
 where $A_d, B_d$ are from Theorem \ref{ar}, and $C_d$ is from Remark \ref{pd}.

We start with the following statement.
\begin{proposition}\label{r} 
There exists a function $r(d)$ such that the following holds. For any finite field $k$ of characteristic $>d$, for any $m \geq 1$ and a polynomial $P:V\to k$ of degree $d$ and nc-rank $r_{nc}(P)>r(d)$ there exist $h_1,\dots ,h_m \in V$ such that the family
  $\bar Q:=\{P,P_{h_1} , \ldots, P_{h_m}\}$  is of nc-rank $ >
T$. 
\end{proposition}

\begin{proof}
Fix $d, m$. Since $P$ is of degree $d$ and $P_w$ is of degree $<d$, we need to find $\bar w = (w_1, \ldots , w_m)$ such  that $\{P_{w_1} , \ldots, P_{w_m}\}$ is of nc-rank $> T$. Note that for $r$ sufficiently large we have   $T\le r$. 
 
Suppose for any $\bar w$ we have that $\{P_{w_1} , \ldots, P_{w_m}\}$ is of nc-rank $\le T$. Then for  any $\bar w$ there exists $b \in k^m\setminus 0$ such that $\sum b_iP_{w_i}$ is of nc-rank $\le T$. By the pigeonhole principle there exists $b \in k^m\setminus 0$ such that  the nc-rank of $\sum b_iP_{w_i} \le T$ for a set $F$ of  $\bar w$ of size $\ge |V|^m/q^m$.  Fix this $b$.  W.l.o.g. $b_1 \ne 0$.  This implies that there exists  $(w_2, \ldots, w_m)$ such that 
$(w_1, w_2, \ldots, w_m) \in F$  for a set $F_1$ if $w_1$ of size  $\ge |V|/q^m$. 
Now subtract to get for  $q^{-m}|V|^2$ many pairs $w_1, w_1'$ we have  
\[
b_1(P_{w_1}-P_{w'_1})
\]
is of nc-rank $<2T$. So that $P_{w_1-w_1'}$ is of nc-rank  $<2T$ for  $q^{-m}|V|^2$ many $w_1, w_1'$.
But this implies by Remark \ref{pd} that the p-rank of $\ti P_{w_1-w_1'}$ is $<2TC_d$, so that
\[
 \|\psi(P)\|^{2^{d}}_{U_{d}} =\mE_w \|\psi(P_w)\|^{2^{d-1}}_{U_{d-1}} > q^{-2TC_d-m},
\]
so that $P$ is of nc-rank 
\[
<A_d (2TC_d+m)^{B_d}.
\] 
To get a contradiction we need
\[
A_d (2TC_d+m)^{B_d}<r.
\]
Namely
\[
T <[(r/A_d)^{1/B_d} -m]/2C_d.
\]
\end{proof}

\begin{corollary} $|\hat f(l)|\leq q^{-(\alpha_dT-1)^{\beta_d}}$ for $l\in L \setminus \{0\}$.
\end{corollary}

Now we prove Theorem \ref{12}.\\

We write  $r=m^a , \ a>0$. Then Proposition \ref{r} guaranties the existence 
of $h_1,\dots ,h_m\in V$ such that the family
  $\bar Q:=\{P,P_{h_1} , \ldots, P_{h_m}\}$  is of nc-rank $ >[(r/A_d)^{1/B_d} -m]/2C_d$. Assuming that $a(B_d)^{-1} \geq 2$ we see that for $m>m_0(d)$ we have that $r_{nc}(\bar Q)\geq c(d)m^{a(B_d )^{-1} }$. As follows from   Proposition \ref{ar} we have 
$\ar(\bar Q)\geq c'(d)(m^{aB^{-1}_d} -1)^{\beta_d}$ for some  $c'(d)>0 $. Choose $a$ such that $aB^{-1}_d \beta_d  \geq 2 $. Then for $m>m_1(d)$  we have $\ar(\bar Q) \ge m+1$
  It is easy to see the existence of  $\Gg (d)>0$ such  that $T(m)>m$ for 
$m=r^{\Gg (d)}$. 
  
It follows now from Claim \ref{eq} that 
$|\mX _{\bar Q}(k_s)| \ll q^{s(\dim (V)-r^{\Gg (d)})}, s\geq 1$ where $k_s/k$ is the extension of degree $s$. Therefore (see 
 Section $3.5$ of \cite{kz})  $\dim (\mX _{\bar Q})=\dim(V)-r^{\Gg (d)} $. The arguments of Section $3.6$ of \cite{kz} show the the same equality is true in the case of an arbitrary field. 
 Since (by Claim \ref{inverse}) $\mX _P^{\sing}\subset \mX _{\bar Q}$ we see that $\kk (P)\geq m$.
\end{proof}

\section{higher codimension}
 Fix numbers $s$ and $n$ and denote by $m = \lfloor n/s \rfloor$, and by  $l:=n-sm$
Let $L=k^s, V= L\otimes k^n= L_1\times \dots L_n, L_i=L , W_j=V_{sj} \times \dots \times V_{s(j+1)-1}$ for $1\leq j\leq m$. So $V=W_1\times \dots \times W_m\times k^l$.

We denote by $y_i^j$ for  $1\leq i\leq n$, $1\leq j\leq s$ the natural coordinates on $V$ and
define a polynomial  of degree $s$,  $F^n _s :V\to k$ by 

$$
 F^n_s(y^1,  \dots, y^s):= \sum_{1 \le i_1<\ldots < i_s\le n} \det(y^1_{i_1}, \ldots, y^s_{i_s})
$$

\begin{claim}There exists $c_0 >0$ such that $b( F^n_s) \le q^{-c_0n/s}$.
\end{claim}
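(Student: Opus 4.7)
My approach would be to bound $b(F^n_s) = \mE_Y \psi(F^n_s(Y))$ by first reducing it to a counting problem via the multilinearity of $F^n_s$, and then exploiting the block decomposition $[n] = T_1 \sqcup \cdots \sqcup T_m \sqcup L$. Since $F^n_s$ is linear in the last row $y^s$, one can write $F^n_s(Y) = \sum_{p=1}^n y^s_p\, c_p(A)$, where $A := (y^1,\dots,y^{s-1}) \in M_{(s-1)\times n}(k)$ and $c_p(A)$ is a signed sum over $(s-1)$-subsets $J \subset [n]\setminus\{p\}$ of the $(s-1)\times(s-1)$ minors $\det(A|_J)$. Averaging over $y^s$ yields
\[
b(F^n_s) \;=\; \Pr_A\bigl[\, c_p(A) = 0 \text{ for every } p \in [n]\,\bigr].
\]

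For $p \in T_a$, split $c_p = c_p^{\mathrm{in}} + c_p^{\mathrm{out}}$, where $c_p^{\mathrm{in}} = \pm\det(A|_{T_a \setminus \{p\}})$ is the unique in-block contribution (from $S = T_a$) and $c_p^{\mathrm{out}}$ collects all cross-block terms; observe that $c_p^{\mathrm{out}}$ depends on $A$ only through columns \emph{outside} $T_a$. By Laplace expansion, the map $w_a : M_{(s-1)\times s}(k) \to k^s$ defined by $B \mapsto (\pm\det(B^{(p)}))_{p \in T_a}$ outputs the signed vector of maximal minors of $B$, which (via Cramer) spans $\ker(B)$. A short codimension calculation gives the uniform estimate
\[
\Pr_B\bigl[\, w_a(B) = w_0 \,\bigr] \;\le\; C_s\, q^{-2} \qquad \text{for every } w_0 \in k^s,
\]
since the fiber $\{w_a = 0\}$ is the codimension-$2$ rank-$\le(s-2)$ locus in $M_{(s-1)\times s}$, while fibers over $w_0 \ne 0$ parametrize rank-$(s-1)$ matrices with prescribed kernel line and scaling, of codimension $s$.

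It remains to combine the per-block bounds across the $m$ blocks. The main obstacle is the cross-block coupling: although $v_a^{\mathrm{out}}$ does not depend on $A|_{T_a}$, the constraints $\{c_p = 0 : p \in T_a\}$ for different $a$'s still interact because $c_p^{\mathrm{out}}$ for $p \in T_a$ depends on the columns of $A$ in the other blocks. My plan is to apply Fourier inversion on $k^n$, replacing $\prod_p \mathbf{1}[c_p(A)=0]$ by $q^{-n}\sum_{t \in k^n} \psi\bigl(F^n_s(A,t)\bigr)$, and then to peel off blocks one at a time by carefully conditioning on the variables outside $T_a$ before applying the uniform estimate above to the $T_a$-variables. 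Accumulating a factor $\le C_s q^{-2}$ per block yields
\[
b(F^n_s) \;\le\; (C_s q^{-2})^m \;\le\; q^{-c_0 n/s}
\]
for a suitable $c_0 > 0$, establishing the claim.
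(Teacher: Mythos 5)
Your opening reduction is sound and is essentially the paper's: averaging over the last row gives $b(F^n_s)=\Pr_A\bigl[c_p(A)=0\ \forall p\bigr]$, just as the paper averages over the first row to reduce to counting common zeros of the partials $Q_i$. The per-block determinantal estimate is also correct as a statement about the map $B\mapsto(\pm\det(B^{(p)}))_{p\in T_a}$ on $(s-1)\times s$ matrices. But there are two genuine gaps. First, the assertion that $c_p^{\mathrm{out}}$ depends on $A$ only through columns outside $T_a$ is false: $c_p(A)$ is a sum over \emph{all} $(s-1)$-subsets $J\subset[n]\setminus\{p\}$, and the terms with $1\le |J\cap T_a|\le s-2$ (which exist as soon as $s\ge 3$) involve columns of $A$ inside $T_a$. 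So after conditioning on the columns outside $T_a$, the constraint on $B=A|_{T_a}$ is not $w_a(B)=w_0$ for a fixed $w_0$ but a perturbed system $w_a(B)+(\text{lower order in }B)=0$, and the uniform fiber bound does not apply as stated (it can likely be repaired by degenerating to the top-degree part, but that has to be argued).

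Second, and more seriously, accumulating a factor $C_sq^{-2}$ per block is not justified. Knowing that each block-event $E_a=\{c_p=0,\ p\in T_a\}$ has conditional probability $\le\delta$ given all columns outside $T_a$ does not imply $\Pr[\bigcap_a E_a]\le\delta^m$, because each $E_a$ also depends on the columns in the \emph{other} blocks; the standard counterexample is two blocks with $E_1=E_2=\{B_1=B_2\}$, where each conditional probability is small but the intersection is only as likely as a single event. Your appeal to Fourier inversion just returns you to the original exponential sum, and ``peeling off blocks by carefully conditioning'' is exactly the step that requires an idea. The paper's device for this is to pass to differences $R_j=Q_{s(j+1)}-Q_{sj}$ of the constraints, chosen so that after cancellation each $R_j$ is supported on its own disjoint block of variables; the events then genuinely factor and one collects a factor $q^{-1}$ per block (hence $q^{-c_0n/s}$). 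You would need an analogous decoupling step --- some selection of combinations of the $c_p$ depending on pairwise disjoint sets of columns --- before any product bound over blocks is legitimate; without it the argument yields only a single factor of $q^{-2}$, not $q^{-2m}$.
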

\begin{proof} Consider first the case $s=2$. Then 
$$P (a_1,\dots ,a_n ; b_1,\dots ,b_n) =\sum_{1\leq i<j\leq n}(a_ib_j-a_jb_i).$$

Let $Y=\{ \bar b| P(\bar a, \bar b)=0, \ \forall \bar a \in k^n\}$. Performing the summation over $\bar a$ we see that 
$b(P)=q^{-2n}|Y|$.

Let  $$Q_i:=\partial P/\partial a_i= \sum _{j>i}b_j-\sum _{j<i}b_j, \quad 1\leq i\leq n-1. $$
 Define   $Y\subset W=k^n$ is defined by the system $Q_i(w)=0$ of 
equations. 
Since 
$$Q_{i+1} (a_1,\dots ,a_n ; b_1,\dots ,b_n)-Q_i (a_1,\dots ,a_n ; b_1,\dots ,b_n) =b_{i+1}-b_i,$$
 we see the $Y$ is defined by the system $b_{i+1}-b_i=0$ of 
equations. So $b(F^n_2)=q^{1-n}$.

\bigskip

Consider now the general case. We write $y^t=(a^t_1,\dots ,a^t_n )$. Let 
$Q_i:= \partial P/\partial y^1_i$ for $1\leq i\leq n$. We define $Y_s\subset W:= k^{n(d-1)}$ as the set of solutions of the system $\{Q^1_i(w)=0\}_{1\leq i\leq n-1}$.

As before $ar(F^n_s)=q^{-sn}|Y_s|$. 

Let $Z _s\subset k^{(s-1)n}$ be defined by the system $\{ R_j=0\}_{1\leq j\leq n/s} $ of equations where $R_j:= Q^1_{s(j+1)} -Q^1_{sj}$, $1\leq j\leq m$. Since $Y_s \subset Z_s$ it is sufficient to show that $|Z_s|\leq q^{(ns-m)}$.

By the construction $R_j$ are non-zero polynomials on $W_j$, $1\leq j\leq m$. Let 
$Z^j_s\subset W_j$ be defined by the equation $R_j(w)=0$. Since $R_j$ is irreducible it follows from the Weil's bounds that $|Z^j_s|\leq (1+s^2q^{-1/2})q^{s^2-1}$. Since 
$Z_s=Z_s^1\times \dots \times Z_s^m\times k^e$ we see that $|Z_s|\leq q^{(ns-m)}$
\end{proof}


Denote $d_i'=d_i-1$, and let $e=\sum d'_i$, and. denote $f_i = \binom{e}{d_i'}$.  Denote $S(d;e)$ subsets of $[e]=\{1, \ldots, e\}$ of size $d$.  
  Denote by $X_i$ the set of subsets of $[1,e]$ of size $d'_i$. We write $V_i$ for the spaces  of $ k[X_i]$ of $k$-valued functions on $X_i$. These spaces have bases parametrized by elements of $X_i$ and 
define $W_i:=V_i\otimes R, R:=k^n$ and $W:= \oplus _iW_i$.  So a point $w\in W$ has coordinates $w_m^{i,x_i}$ , $1\leq m\leq n$, $1 \leq i\leq s ,x_i\in X_i$. For any 
sequence $\bl =(i_1<\dots <i_s)$  and $\bar x=(x_1, \dots ,x_s)$, $x_j\in X_j$ we denote by 
$h_{\bl ,\bar x}$ the polynomial on $W$ which is the determinant of an $s\times s$ matrix with elements $a_{ij}=w_{l_i}^{j,x_j}$.

Consider the function 
\[ \begin{aligned}
&G^n_s((y^{1,x_1})_{x_1 \in X_1}, \ldots   (y^{s,x_s})_{x_s \in X_s})
= \sum_{x_i \in X_i, \bigcup_{i=1}^s x_i = [e]}  \sum_{1 \le i_1<\ldots < i_s\le n} \det(y^{1,x_1}_{i_1}, \ldots, y^{s, x_s}_{i_s})
\end{aligned}\]
Then $G^n_s$ is a sum of the polynomials  $h_{\bl ,\bar x}$. 
Now the restriction of  $G^n_s $ onto $ k^s\otimes R $  that is the polynomial
$G^n_s \circ \phi : k^s\otimes R \to k $ coincides with the polynomial $F^n_s$, thus we obtain:

\begin{claim}\label{ugly}
 $b(G^n_s) \le q^{- c_0n/s}.$
\end{claim}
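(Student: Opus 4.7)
The plan is to mimic the preceding claim's proof of the bound $b(F^n_s) \le q^{-c_0 n/s}$, adapted to the additional partition sum defining $G^n_s$. First, observe that $G^n_s$ is multilinear of degree $s$ in the variable groups $W_1, \dots, W_s$, and in particular is linear in the $W_1$-variables $y^{1, x_1}_i$. Summing over $W_1$ in the Fourier expansion of $b(G^n_s)$ yields
$$
b(G^n_s) \;=\; \Pr_{w' \in W_2 \oplus \cdots \oplus W_s}\bigl[\, Q^{(x_1)}_i(w') = 0 \text{ for all } x_1 \in X_1,\; i \in [n]\,\bigr],
$$
where $Q^{(x_1)}_i := \partial G^n_s / \partial y^{1, x_1}_i$ is a polynomial of degree $s-1$. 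Fixing a single $x_1^0 \in X_1$ and discarding all other constraints, it suffices to upper-bound the probability of the subsystem $\{Q^{(x_1^0)}_i = 0\}_{i \in [n]}$, which only involves the variables $y^{k, x_k}$ with $k \ge 2$ and $x_k \subset [e] \setminus x_1^0$.

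Expanding the determinants in $G^n_s$ along the first column, $Q^{(x_1^0)}_i$ is a signed sum of $(s-1) \times (s-1)$ minors indexed by partitions $(x_2, \dots, x_s)$ of $[e] \setminus x_1^0$ and by $(s-1)$-subsets $J \subset [n] \setminus \{i\}$, with signs $(-1)^{|\{j \in J : j < i\}|}$. This is structurally the same as the polynomials $Q_i$ appearing in the preceding proof, just with an extra outer sum over partitions. I would then form the telescoped differences
$$
R_j \;:=\; Q^{(x_1^0)}_{s(j+1)} - Q^{(x_1^0)}_{sj} \qquad (1 \le j \le m := \lfloor n/s \rfloor),
$$
and argue, by exactly the same sign-cancellation that appears in the preceding proof, that $R_j$ depends only on the $y^{k, x_k}$ variables with row-index $i$ in a chunk of size $O(s)$ around $sj$, so that the $m$ chunks are (essentially) disjoint across $j$.

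Applying Weil's bound to each $R_j = 0$ hypersurface in its chunk, in the same way as in the preceding proof, and multiplying over the $m$ disjoint chunks yields
$$
b(G^n_s) \;\le\; \Pr[R_j = 0 \text{ for all } j] \;\le\; (1 + O(q^{-1/2}))^m \, q^{-m} \;\le\; q^{-c_0 n/s}
$$
for a suitable $c_0 > 0$.

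The main obstacle I anticipate is verifying that each $R_j$ is indeed a non-zero polynomial with a suitably irreducible top-degree component on its chunk: the outer sum over partitions of $[e] \setminus x_1^0$ introduces terms that can share variables (for $s \ge 4$, different partitions of $[e]\setminus x_1^0$ can agree on some $x_k$), and one must rule out accidental cancellations that trivialize $R_j$. This amounts to establishing a partition-weighted analogue of the irreducibility of $R_j$ used in the preceding proof, which I expect to follow from a direct inspection of the leading monomials in $R_j$ but is the main point where the present argument departs from the earlier one.
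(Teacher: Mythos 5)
Your proposal attempts a from-scratch re-run, for $G^n_s$, of the counting argument used for $F^n_s$ (differentiate in the first variable group, telescope, chunk, apply Weil), and you yourself flag the decisive step --- that each $R_j$ is a nonzero polynomial with an irreducible top part on its chunk --- as something you ``expect'' rather than prove. That is a genuine gap, and there is a second, unflagged one that is worse: the localization claim fails for $s\ge 3$. Each $Q^{(x_1^0)}_i$ is a signed sum of $(s-1)\times(s-1)$ minors over all $(s-1)$-subsets $J\subset[n]\setminus\{i\}$; in the telescoped difference $Q^{(x_1^0)}_{s(j+1)}-Q^{(x_1^0)}_{sj}$ only the terms with $J$ disjoint from both distinguished indices cancel, and the surviving terms have $J$ containing exactly one of them, with the remaining $s-2$ elements of $J$ ranging over all of $[n]$. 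So $R_j$ does not depend only on a chunk of $O(s)$ columns, the sets $\{R_j=0\}$ do not factor as a product over disjoint chunks, and the step where you multiply the Weil bounds over $j$ collapses. (The irreducibility needed to invoke Weil on each factor is also never established.)

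The paper's own proof is a one-line reduction that bypasses all of this. $G^n_s$ is multilinear in the $s$ groups of variables $(y^{i,x_i})_{x_i\in X_i}$; restricting the $i$-th group to the subspace where $y^{i,x_i}=0$ for every $x_i\neq x_i^0$, for one fixed partition $(x_1^0,\dots,x_s^0)$ of $[e]$, kills every term of the outer partition sum except one and yields exactly $F^n_s$. Since the bias of a multilinear form can only increase when its variable groups are restricted to subspaces (equivalently, analytic rank is non-increasing under restriction, as in \cite{lovett-rank}), this gives $b(G^n_s)\le b(F^n_s)\le q^{-c_0 n/s}$ directly from the preceding claim, with no new computation and no irreducibility or cancellation issues. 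I would replace your argument by this reduction.
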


Consider the collection of polynomials $ F^n_{i,s} : k^{snm} \to k$ defined by
\[
F^n_{i,s}(y_1^1, \ldots,  y_m^1,  \ldots, y_1^s, \ldots, y_m^s) : =  F_s^n( y_i^1,\ldots,  y_i^s); \qquad 1 \le i \le m.
\]
Then the bias of any non trivial  linear combination of this collection is $ \le q^{-c_0n/s}$. Similarly $\{G^n_{i,s}\}_{i=1}^m$.


\begin{theorem}\label{m}  For any $m,d, s >0$, $1 <  d_i \le d, 1 \le i \le s$  there exists $C=C(m, d, s)>0$ such for  any polynomial family $\{P^1, \ldots, P^s\}$ on a $k$ vector space $V$,  $P^i$ of  degrees $d_i$ for $1 \le i \le s$,  and nc-rank $>Cr^C$ the following holds:
 there exist $t= t(r,s,m)$ and $w_1, \ldots, w_{tm} \in V$,  such that the collection  $\bar Q$ of functions 
 $$\{P^1, \ldots, P^s, \ F^t_{i,s}(P^1_{w_1} , \ldots, P^1_{w_{tm}},  \ldots,P^s_{w_1} , \ldots, P^s_{w_{tm}}); \ i \in [m]\}$$
 is of nc-rank $>r$.\end{theorem}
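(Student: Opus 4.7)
I would first observe that each polynomial $F^t_{i,s}(P^1_{w_1},\ldots,P^s_{w_{tm}})$ has degree $e := \sum_j(d_j-1)$, so in the regime $e > d$ (which holds once $s\geq 3$; the boundary cases $s \in \{1,2\}$ require direct handling) the nc-rank of any nonzero linear combination
\[
\sum_j \alpha_j P^j + \sum_i \beta_i F^t_{i,s}(P^1_{w_1},\ldots,P^s_{w_{tm}})
\]
is determined by its top-degree part $\sum_i \beta_i F^t_{i,s}(\ldots)$. The case $\beta \equiv 0$ is settled by the hypothesis $r_{nc}(\{P^j\}) > Cr^C$, so the task reduces to producing shifts $(w_k)$ such that, for every nonzero $\beta \in k^m$, the polynomial $R_w^\beta := \sum_i \beta_i F^t_{i,s}(\ldots)$ has nc-rank $> r$.

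\textbf{Averaging the bias.} I would then attack this by a bias computation. If $R_w^\beta$ has nc-rank $\le r$, then Claim \ref{Ga} combined with Remark \ref{pd} gives $b(\widetilde{R_w^\beta}) \ge q^{-C_e r}$, and averaging over $w \in V^{tm}$ yields
\[
\mE_w\, b(\widetilde{R_w^\beta}) \;=\; \mE_{w, v_1,\ldots,v_e}\, \psi\bigl(\widetilde{R_w^\beta}(v_1,\ldots,v_e)\bigr).
\]
Because $\Delta_{v_1}\cdots\Delta_{v_e}$ commutes with the substitution $y^j_k \mapsto P^j_{w_{(i-1)t+k}}$ and distributes across the $s$-fold determinants of $F^t_s$ by the Leibniz rule, the inner expression expands as a sum, over partitions of $[e]$ into $s$ ordered blocks of sizes $d'_1,\ldots,d'_s$, of products of $s \times s$ determinants whose entries are the multilinear forms $\tilde P^j(w_{(i-1)t+k}, v_{l_1},\ldots,v_{l_{d'_j}})$ with $(l_1,\ldots,l_{d'_j})$ running through the $j$-th block. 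After collecting terms this matches the structure of $G^t_s$ from the preceding paragraph of the paper, evaluated on vectors built from the $\tilde P^j$'s.

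\textbf{Transferring Claim \ref{ugly}.} Next I would apply Proposition \ref{r}, iterated and extended to the multi-polynomial setting, to produce shifts for which $\{P^j\}\cup\{P^j_{w_k}\}$ has joint nc-rank at least some $R$ polynomial in $r,s,m$. High joint nc-rank should make the map $v \mapsto (\tilde P^j(w_k, v_\bullet))_{j,k}$ close to equidistributed on $k^{stm}$, so that Claim \ref{ugly}, together with its companion bound on arbitrary nontrivial linear combinations of the $F^t_{i,s}$, transfers to
\[
\bigl| \mE_w\, b(\widetilde{R_w^\beta}) \bigr| \;\le\; q^{-c_0 t/s} + \mathrm{error}(R),
\]
with $\mathrm{error}(R) \to 0$ as $R \to \infty$. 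A union bound over the $\le q^m$ nonzero $\beta$'s then picks a single $(w_k)$ that works for all $\beta$ simultaneously, provided $c_0 t/s > C_e r + m + O(1)$, arranged by taking $t$ polynomial in $r,s,m$.

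\textbf{Main obstacle.} The hardest step will be the equidistribution transfer: showing that $\mE_w b(\widetilde{R_w^\beta})$ is controlled by $b(G^t_s)$ up to an error depending only on the joint nc-rank of $\{P^j_{w_k}\}$. This requires a Cauchy--Schwarz/Gowers-type argument converting high nc-rank of the $P^j$'s and their shifts into near-equidistribution of $v \mapsto (P^j_{w_k}(v))_{j,k}$ on $k^{stm}$, together with the combinatorial bookkeeping of how $\Delta_{v_1}\cdots\Delta_{v_e}$ partitions $[e]$ across the $s$ rows of the determinants constituting $F^t_{i,s}$; the resulting double-exponential dependence on $d$ and $s$ should account for the double-exponential $C$ announced in the theorem.
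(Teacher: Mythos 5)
Your outline gets the architecture right up to a point: the reduction to the top-degree part (degree $e=\sum d_i'$), the identity expressing $\widetilde{R^\beta_w}$ as $G^t_s$ evaluated on the derivative forms $\ti P^j_{w_k}$, and the role of Claim \ref{ugly} with $t$ chosen polynomially so that $c_0t/s$ beats $C_er+m$ are all present in the paper's argument. But your central step — the ``equidistribution transfer'' — is a genuine gap, and it is precisely the step the paper is engineered to avoid. You propose to first secure, via an ``iterated and extended'' Proposition \ref{r}, shifts for which the whole family $\{P^j_{w_k}\}_{j,k}$ has high joint nc-rank, and then deduce that $\mE_w\, b(\widetilde{R^\beta_w})$ is close to $b(G^t_s)$. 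Two problems. First, the multi-polynomial, all-linear-combinations version of Proposition \ref{r} you invoke is not available and is itself of the same order of difficulty as the theorem being proved (Proposition \ref{r} is proved by exactly the contradiction-plus-differencing argument you would be trying to bootstrap). Second, and more fundamentally, even granting joint high nc-rank of $\{P^j_{w_k}\}$, what you actually need after Fourier-expanding $\psi\circ G$ on $k^{tmf}$ is smallness of $\mE_u\,\chi\bigl((\ti P^{j,x}_{w_k}(u_{|x}))_{j,x,k}\bigr)$ for every nontrivial $\chi$, i.e.\ small bias for arbitrary linear combinations of multilinear forms evaluated on \emph{different overlapping subsets} $x\subset[e]$ of the variables. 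Equidistribution of $v\mapsto(P^j_{w_k}(v))_{j,k}$ on $k^{stm}$ says nothing directly about this, and high rank of each $\ti P^j_{w_k}$ does not formally imply it; this is where the real content lies, and your proposal leaves it as an acknowledged ``obstacle'' rather than an argument.

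The paper's route never proves such an equidistribution statement. It runs the entire argument as a contrapositive: assume every choice of $\bar w$ yields a combination of nc-rank $<T$; pigeonhole over the coefficients $(b,c)$ to fix one bad combination valid for a $q^{-(m+s)}$ fraction of $\bar w$; take the $e$-fold derivative to kill the $P^j$'s; Fourier-expand, use Claim \ref{ugly} to discard the trivial character, and pigeonhole again over the $q^{tmf}$ characters to extract a single nontrivial linear combination of the $\ti P^{j,x}_{w_k}$ with large bias for many $\bar w$; finally, difference in the $w_1$ coordinate to isolate $\ti P^{1}_{w_1-w_1'}$ and conclude via Theorem \ref{ar} and Claim \ref{Ga} that $P^1$ itself has nc-rank $<r$, contradicting the hypothesis. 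If you want to salvage your plan, the honest fix is to replace the equidistribution lemma by this contradiction/differencing scheme — at which point you have reproduced the paper's proof rather than found an alternative one. (A minor additional point: your union bound over the $\le q^m$ values of $\beta$ also needs to include the $q^s$ choices of $\alpha$, since the families with $\beta=0$ but mixed $\alpha$ are handled by hypothesis only after the degree bookkeeping of Claim \ref{ord}.)
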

 
 {\color{blue}  I will assume that $d_i >1$ this is ok ? }
 \begin{proof}
 

  To simplify notations we write  $T=T(m,s,d):=m^{-1}a_1(s,d)r^{a_2(s,d)}-m$, where the constants $a_1(s,d), a_2(s,d)$ will be chosen later and will depend on the constants  $A_d, B_d, \alpha_d, \beta_d$ from Proposition \ref{ar}.

\begin{proposition}\label{rh}  For any  polynomials $P^1, \ldots, P^s:V\to k$ of degrees $d_i, 1 < i \le s$,  and nc-rank $r>r(d,s)$ there exist $t>0$ and  $w_1,\dots ,w_{tm} \in V$, such that the family
 $$\bar Q:=\{P^1, \ldots, P^s, \ F^t_{i,s}(P^1_{w_1} , \ldots, P^1_{w_{tm}},  \ldots,P^s_{w_1} , \ldots, P^s_{w_{tm}}); \ i \in [m]\}$$
is of nc-rank $ >T$.

\end{proposition}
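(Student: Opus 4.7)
The plan is to adapt the contradiction strategy from the proof of Proposition~\ref{r}, using the low-bias property of the family $\{F^t_{i,s}\}$ (Claim~\ref{ugly}) to handle the new terms. Suppose for contradiction that for every $\bar w = (w_1,\ldots,w_{tm}) \in V^{tm}$ the family $\bar Q$ has nc-rank $\leq T$; then there is a non-zero $(\alpha,\beta) \in k^{s+m}$ for which
\[
R_{\bar w,\alpha,\beta} := \sum_{j=1}^s \alpha_j P^j + \sum_{i=1}^m \beta_i\, F^t_{i,s}(P^1_{w_1},\ldots,P^s_{w_{tm}})
\]
has nc-rank $\leq T$. Pigeonholing over the projective class $(\alpha:\beta) \in \mP^{s+m-1}(k)$, I may fix a single non-zero $(\alpha,\beta)$ such that the set $\mcW := \{\bar w : r_{nc}(R_{\bar w,\alpha,\beta}) \leq T\}$ has density at least $q^{-(s+m)}$ in $V^{tm}$. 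If $\beta=0$ then $\sum_j \alpha_j P^j$ itself has nc-rank $\leq T$, contradicting $r_{nc}(\{P^j\}) > r > T$; so I may assume $\beta_1 \neq 0$.

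As in the proof of Proposition~\ref{r}, Remark~\ref{pd}(3) and Claim~\ref{Ga} give, for every $\bar w \in \mcW$, the lower bound $\|\psi(R_{\bar w,\alpha,\beta})\|_{U_{d^*}}^{2^{d^*}} \geq q^{-C_{d^*} T}$, where $d^* := \deg R_{\bar w,\alpha,\beta}$. Averaging over $\mcW$ and using its density bound yields
\[
\mE_{\bar w \in V^{tm}} \|\psi(R_{\bar w,\alpha,\beta})\|_{U_{d^*}}^{2^{d^*}} \geq q^{-C_{d^*} T - (s+m)}.
\]
For the matching upper bound, expand the $U_{d^*}$-norm as a $2^{d^*}$-fold average over $(x,h_1,\ldots,h_{d^*})$ and interchange this with the $\bar w$-average. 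The $\sum_j \alpha_j P^j$ summand is $\bar w$-independent and contributes only a unit-modulus phase. For the $F^t_{i,s}$ part, the structural point (already exploited in Claim~\ref{ugly}) is that distinct indices $i$ use disjoint blocks of $t$ consecutive variables $w_k$, so the $\bar w$-average factorises as a product of $m$ independent block-averages. On block $i=1$ (where $\beta_1 \neq 0$), I combine $b(F^t_s) \leq q^{-c_0 t/s}$ with a quantitative equidistribution statement for the map $w \mapsto (P^j_w(x+\mathrm{shift}))_j$: since the nc-rank hypothesis on $\{P^j\}$ together with Theorem~\ref{ar} and Remark~\ref{11}(3) forces the bias of every non-trivial linear combination $\sum_j \ell_j P^j$ to be at most $q^{-(\alpha_d r - 1)^{\beta_d}/2^d}$, this pushforward is exponentially close to uniform. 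This gives an upper bound of the shape $q^{-c_0 t/s + \mathrm{err}(r,d,s)}$, where $\mathrm{err}$ is negligible once $r$ is large enough.

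Comparing the two bounds yields $c_0 t/s \leq C_{d^*} T + (s+m) + \mathrm{err}(r,d,s)$. Choosing $t$ to be a suitable polynomial in $m$ and $r$ and solving for $T$ using the explicit form of $\mathrm{err}$ from Theorem~\ref{ar} produces the claimed bound $T > m^{-1} a_1(s,d) r^{a_2(s,d)} - m$ with constants $a_1,a_2$ depending double-exponentially on $s,d$. The main obstacle I anticipate is controlling the equidistribution uniformly across all $2^{d^*}$ shifted arguments $x+\sum_i \omega_i h_i$ simultaneously: a naive union bound over the $2^{d^*}$ shifts and the $q^s-1$ non-trivial choices of $\ell$ is too lossy, so one must instead apply Theorem~\ref{ar} directly to the joint composite tensor of all shifted linear combinations of the $P^j$'s, mirroring the inductive derivation of the Gowers-norm bound in the proof of Proposition~\ref{r}.
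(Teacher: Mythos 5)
Your skeleton matches the paper's: argue by contradiction, pigeonhole a single coefficient vector $(\alpha,\beta)$ over a $q^{-(s+m)}$-dense set of $\bar w$, observe that $\beta\neq 0$ because $\{P^j\}$ has high rank, pass to the top-degree ($e$-fold derivative) part so that only the determinant composites survive, and play the small bias of the determinant form against the high rank of the $P^j$'s. The choice of $t$ and the final bookkeeping are also the same.

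The genuine gap is in your upper bound, at the step you yourself flag as "the main obstacle." After interchanging averages, what you must bound is $\mE_{h,\bar w}\,\psi\bigl(\beta_1 G^t_s(\ldots)\bigr)$ where the arguments are the partially evaluated multilinear forms $\ti P^j(h_x,w_l)$, ranging over all $j\in[s]$, all $d_j'$-subsets $x\subset[e]$, and all $l$ in the first block. The equidistribution you need is therefore of the $tf$-component map $(h,\bar w)\mapsto\bigl(\ti P^j(h_x,w_l)\bigr)_{j,x,l}$, i.e.\ smallness of the bias of \emph{every} non-trivial linear combination $\sum_{j,x,l}\lambda_{j,x,l}\ti P^j(h_x,w_l)$. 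The fact you cite --- that every non-trivial $\sum_j\ell_j P^j$ has small bias --- concerns a different and much smaller family and does not yield this; and "applying Theorem \ref{ar} to the joint composite tensor" is not a resolution either, since the object in question is a sum of multilinear forms living on different subsets of the variable slots, not a single $d$-tensor, and Theorem \ref{ar} converts large bias into low partition rank rather than delivering equidistribution directly. The paper closes exactly this gap by a three-step argument you would need to reproduce: Fourier-expand $\psi\circ g$ in the free variables $y$ so that the trivial character is controlled by Claim \ref{ugly} and some non-trivial character $\chi$ must correlate; pigeonhole $\chi$ over a $q^{-tmf}$-dense set of $\bar w$; then difference in the single coordinate $w_1$ (keeping the other $w_l$ fixed) to isolate a combination of the forms $\ti P^{j}(\cdot\,,w_1)-\ti P^{j}(\cdot\,,w_1')$, whence Theorem \ref{ar} plus the Gowers-norm identity $\|\psi(P^1)\|_{U_{d_1}}^{2^{d_1}}=\mE_w\|\psi(P^1_w)\|_{U_{d_1-1}}^{2^{d_1-1}}$ forces $P^1$ to have low nc-rank, the desired contradiction. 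Without this character-pigeonhole-and-differencing step your equidistribution claim is unsupported, and it is precisely where the quantitative dependence of $T$ on $r$ is generated.
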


\begin{proof} 
Fix $d, m$, $s>1$. 
We need to find $\bar w = (w_1, \ldots , w_{tm})$ such that
\[
\{P^1, \ldots, P^s, \ F^t_{i,s}(P^1_{w_1} , \ldots, P^1_{w_{tm}},  \ldots,P^s_{w_1} , \ldots, P^s_{w_{tm}}), \ i \in  [m]\}
\]
 is of nc-rank $> T$. 
 
Suppose for any  $\bar w=(w_1, \ldots, w_{tm})$, the system 
\[
(*) \quad \{P^1, \ldots, P^s, F^t_{i,s}(P^1_{w_1} , \ldots, P^1_{w_{tm}},  \ldots,P^s_{w_1} , \ldots, P^s_{w_{tm}}), \quad i \in  [m]\}
\]
of polynomials is of nc-rank $<T$, but the system $\{P^1, \ldots, P^s\}$ is of nc-rank $>T$. 

Then for  any $\bar w$ there exists $b  \in k^s, c \in k^{m}\setminus 0$ such that 
\[
\sum_{j=1}^s b_jP^j+\sum_{i=1}^m c_iF^t_{i,s} (P^1_{w_1} , \ldots, P^1_{w_{tm}},  \ldots,P^s_{w_1} , \ldots, P^s_{w_{tm}})
\]
 is of nc-rank $<T$. By the pigeonhole principle there exists $b  \in k^s, c \in k^{m}\setminus 0$ such that  $\sum_{j=1}^s b_jP^j+\sum_{i=1}^m  c_iF^t_{i,s} $ is of nc-rank $<T$  for a set of 
$\bar w$ of size $\ge |V|^{tm}/q^{m+s}$.  Fix these $b,c$. 
Since the degree of $\sum_{i=1}^m c_i  F^t_{i,s}$ is $e=\sum_{i=1}^sd'_i > \max d_i$, we get that
the p-rank of 
\[
\Delta_{u_e} \ldots \Delta_{u_1}\sum_{i=1}^m c_i  F^t_{i,s} ( P^1_{w_1}(x) , \ldots,  P^1_{w_{tm}}(x),  \ldots,P^s_{w_1}(x) , \ldots,  P^s_{w_{tm}}(x)) < TC_e,
\]
where $C_e$ is the constant from Remark \ref{pd}. Thus  
\[
 \psi \left( \Delta_{u_e} \ldots \Delta_{u_1}\sum_{i=1}^m c_i  F^t_{i,s} ( P^1_{w_1}(x) , \ldots,  P^1_{w_{tm}}(x),  \ldots,P^s_{w_1}(x) , \ldots,  P^s_{w_{tm}}(x)) \right)  
 > q^{-(TC_e+m+s)}.
\]
for some non trivial character $\psi:k \to S^1$. 

The argument of $\psi$ is 
\[ \begin{aligned}
&G(u_1, \ldots, u_e) =\sum_{i=1}^m c_i  G^t_{i,s} ( (\ti P^{1, x^1_1}_{w_1}), \ldots,  (\ti P^{1, x^1_{tm}}_{w_{tm}}),  \ldots ,(\ti P^{s, x^s_1}_{w_1}) , \ldots,  (\ti P^{s, x^s_{tm}}_{w_{tm}})) 
\end{aligned}\]
where $(\ti P^{i, x^i_j}_{w_j})$ denote the tuple $(\ti P^{i, x^i_j}_{w_j})_{x^i_j \in X_i}$, and   $\ti P^{i, x^i_j}_{w}(u) = \ti P^i_w(u_{i_1},\ldots, u_{i_{d_i'}})$ if $x_j^i= \{i_1,\ldots, i_{d'_i}\}$.

By Claim \ref{ugly} the function  
\[
g(y)=\sum_{i=1}^m c_i  G^t_{i,s}((y^{1, x^1_1}_{1}), \ldots (y^{1, x^1_{tm}}_{tm}), \ldots, (y^{s, x^s_1}_{1}),, \ldots, ,(y^{s, x^s_{tm}}_{tm})),
\]
where $(y^{i, x^i_j}_{j})$ denote the tuple $(y^{i, x^i_j}_{j})_{x^i_j \in X_i}$, is of p-rank $\ge c_0t/2sc$.  Denote $f_i = \binom{e}{d'_i}$, and let $f=\sum_{i=1}^s f_i$

 Consider the function $\psi \circ g: k^{tmf} \to \mathbb C$. By Fourier analysis,
\[
 \psi \circ g(y) =  \sum_{\chi \in \hat{k}^{tmf}}a_{\chi} \chi(y)
\]  
Since $g$ is multilinear, and by Claim \ref{ugly} we have that the coefficient of the trivial character is bounded :
\[
|\mE_{y}  \psi \circ g(y) | < q^{- c_0t/s}
\]
We get
\[
(\star)  \quad  \left | \mE_u \sum_{ \chi \ne 1, \chi \in \hat k^{tmf }}a_{\chi }\chi(  (\ti P^{1, x^1_1}_{w_1}), \ldots,  (\ti P^{1, x^1_{tm}}_{w_{tm}}),  \ldots ,(\ti P^{s, x^s_1}_{w_1}) , \ldots,  (\ti P^{s, x^s_{tm}}_{w_{tm}})) \right |
\]
is greater equal than
\[
 \left | \mE_u \sum_{  \chi \in \hat k^{tmf }}a_{\chi }\chi((\ti P^{1, x^1_1}_{w_1}), \ldots,  (\ti P^{1, x^1_{tm}}_{w_{tm}}),  \ldots ,(\ti P^{s, x^s_1}_{w_1}) , \ldots,  (\ti P^{s, x^s_{tm}}_{w_{tm}})) \right |- q^{- c_0t/s}
\]
so that $(\star)$  is   $\ge q^{-TC_e-m-s} -  q^{- c_0t/s }$.

Choose $t$ so that $q^{-  c_0t/s} < q^{-2(TC_e+m+s)}$, i.e. $$t= \lceil ((s/c_0) 2(TC_e+m+s))\rceil.$$
Thus for any $\bar w$ we can find $\chi \ne 1, \chi \in \hat k^{tmf } $ such that 
\[
  \left | \mE_u a_{\chi }\chi(  (\ti P^{1, x^1_1}_{w_1}), \ldots,  (\ti P^{1, x^1_{tm}}_{w_{tm}}),  \ldots ,(\ti P^{s, x^s_1}_{w_1}) , \ldots,  (\ti P^{s, x^s_{tm}}_{w_{tm}})) \right | \ge q^{-2(TC_e+m+s)} .
\]
Set $D=2(TC_e+m+s)$.
By the pigeonhole principle there is a set $A$ of $\bar w$ of size $\ge q^{-tmf}|V|^{tm}$, and $\chi \ne 1, \chi \in \hat k^{tmf } $ such that for all $\bar w \in A$  we have 
\[
  \left |\mE_u a_{\chi }\chi(  (\ti P^{1, x^1_1}_{w_1}), \ldots,  (\ti P^{1, x^1_{tm}}_{w_{tm}}),  \ldots ,(\ti P^{s, x^s_1}_{w_1}) , \ldots,  (\ti P^{s, x^s_{tm}}_{w_{tm}})) \right |\ge q^{-D}
\]
This implies that   there is a non trivial linear combination of 
\[
 (\ti P^{1, x^1_1}_{w_1}), \ldots,  (\ti P^{1, x^1_{tm}}_{w_{tm}}),  \ldots ,(\ti P^{s, x^s_1}_{w_1}) , \ldots,  (\ti P^{s, x^s_{tm}}_{w_{tm}})
\]
for all $\bar w \in A$, which is of p-rank $\le A_{d}(D^{B_d}+1)$.

Without loss of generality the coefficient of
the first term is not zero. 
This implies that  for a set of $w_1, w_1'$ of size  $\ge q^{-tmf}|V|^2$, we have that 
\[
 \ti P^{1, x^1_1}_{w_1}(u)-\ti P^{1, x^1_1}_{w'_1}(u)
\]
is of p-rank $\le 2A_{d}(D^{B_d}+1)$

But this implies that for a set of size  $\ge q^{-tmf}|V|$ of $w$ we have $ \ti P^{1, x^1_1}_{w}$
is of p-rank $\le 2A_{d}(D^{B_d}+1)$

So that 
\[
\mE_{w \in V}  \mE_{u \in V^{d'_1}} \psi( \ti P^{1}_{w}(u)) \ge q^{-2A_{d}(D^{B_d}+1) -tmf} .
\]
But this now implies that
\[
  \|\psi(P^1)\|^{2^{d_1}}_{U_{d_1}}\ge  q^{-2A_{d}(D^{B_d}+1) -tmf},
\]
and thus $P$ is of $p$-rank  
\[
 \le A_d(( 2A_{d}(D^{B_d}+1) +tmf)^{B_d}+1)
\]
To get a contradiction we need the above to be $<r$.
Namely, 
\[
 A_d(( 2A_{d}((2(TC_e+m+s))^{B_d}+1) +tmf)^{B_d}+1)<r
\]
The above condition can be replaced by a condition of the form:
\[
TC_e< m^{-1}a_1(s,d)r^{a_2(s,d)}-m.
\]
with $a_1(s,d), a_2(s,d)$ depending polynomially on $s$ and on $A_d, B_d,$ from Proposition \ref{ar}.
\end{proof}

We can consider the collection $\bar Q$ of \ref{m} as a map 
$\bar Q:\mA ^N\to \mA ^f$. Since (by Claim \ref{inverse})  $\mX ^{\sing}_{\bar P}\subset \bar Q^{-1}(0)$ we derive  Theorem \ref{main} from Proposition \ref{rh} using the same arguments as in the derivation of Theorem \ref{1} from Proposition \ref{r}.
\end{proof}

\end{document}